\newcommand{\tpmod}[1]{{\@displayfalse\pmod{#1}}}
\newcommand{\ord}{\operatorname{ord}}
\newtheorem{thm}{Theorem}[section]
\newtheorem{lemma}[thm]{Lemma}
\newtheorem{prop}[thm]{Proposition}
\theoremstyle{remark}
\theoremstyle{definition}
    \newtheorem{defn}[thm]{Definition}
\newtheorem{rem}[thm]{Remark}
\newcommand{\abs}[1]{\left|{#1}\right|}
\def\FF {{\mathcal F}}
\def\Z {{\mathbb Z}}
\def\NN {{\mathcal N}}
\def\Q {{\mathbb Q}}
\def\C {{\mathcal C}}
\def\D {{\mathcal D}}
\def\F {{\mathbb F}}
\def\D {{\mathcal D}}
\def\Z {{\mathbb Z}}
\def\Q {{\mathbb Q}}
\def\C {{\mathbb C}}
\def\CC {{\mathcal C}}
\def\red#1 {\textcolor{red}{#1 }}
\def\blue#1 {\textcolor{blue}{#1 }}
\numberwithin{equation}{section}
\def\Z {{\mathbb Z}}
\begin{document}

\title[Generalized Wall-Sun-Sun primes and monogenic trinomials]{Generalized Wall-Sun-Sun primes and monogenic power-compositional trinomials}


\author{Lenny Jones}
\address{Professor Emeritus, Department of Mathematics, Shippensburg University, Shippensburg, Pennsylvania 17257, USA}
\email[Lenny~Jones]{doctorlennyjones@gmail.com}

\date{\today}

\begin{abstract}
For positive integers $a$ and $b$, we let $[U_n]$  be the Lucas sequence of the first kind defined by
\[U_0=0,\quad U_1=1\quad \mbox{and} \quad U_n=aU_{n-1}+bU_{n-2} \quad \mbox{ for $n\ge 2$},\] and let $\pi(m):=\pi_{(a,b)}(m)$ be the period length of $[U_n]$ modulo the integer $m\ge 2$, where $\gcd(b,m)=1$. We define an \emph{$(a,b)$-Wall-Sun-Sun prime} to be a prime $p$ such that
$\pi(p^2)=\pi(p)$. When $(a,b)=(1,1)$, such a prime $p$ is referred to simply as a \emph{Wall-Sun-Sun prime}.

We say that a monic polynomial $f(x)\in {\mathbb Z}[x]$ of degree $N$ is \emph{monogenic} if $f(x)$ is irreducible over ${\mathbb Q}$ and
\[\{1,\theta,\theta^2,\ldots, \theta^{N-1}\}\]
is a basis for the ring of integers of ${\mathbb Q}(\theta)$, where $f(\theta)=0$.

 Let $f(x)=x^2-ax-b$, and let $s$ be a positive integer. Then, with certain restrictions on $a$, $b$ and $s$,
 we prove that the monogenicity of
 \[f(x^{s^n})=x^{2s^n}-ax^{s^n}-b\] is independent of the positive integer $n$ and is determined solely by whether $s$ has a prime divisor that is an $(a,b)$-Wall-Sun-Sun prime.  This result improves and extends previous work of the author in the special case $b=1$.
\end{abstract}

\subjclass[2020]{Primary 11R04, 11B39, Secondary 11R09, 12F05}
\keywords{Wall-Sun-Sun prime, monogenic, power-compositional}

\maketitle
\section{Introduction}\label{Section:Intro}
Throughout this article, we let $(*)$ denote the set of conditions:
\[(*)\left\{\begin{array}{l}
a \mbox{ and } b \mbox{ are positive integers}\\
  a\not \equiv 0 \pmod{4}\\
  b \mbox{ is squarefree}\\
  \D \mbox{ is squarefree,}
\end{array}\right.\] where
 \[\D:=\left\{\begin{array}{cl}
  a^2+4b & \mbox{if $a\equiv 1 \pmod{2}$}\\
   (a/2)^2+b & \mbox{if $a\equiv 0 \pmod{2}$.}
 \end{array} \right.\]

We also let $[U_n]$ be the Lucas sequence of the first kind defined by
\begin{equation}\label{Eq:Lucas}
U_0=0,\quad U_1=1\quad \mbox{and} \quad U_n=aU_{n-1}+bU_{n-2} \quad \mbox{ for $n\ge 2$}.
\end{equation}
The sequence $[U_n]$ is well known to be periodic modulo any integer $m\ge 2$, where $\gcd(b,m)=1$, and we let $\pi(m):=\pi_{(a,b)}(m)$ denote the length of the period of $[U_n]$ modulo $m$.

\begin{defn}\label{Def:(a,b)WSS}
An \emph{$(a,b)$-Wall-Sun-Sun prime} is a prime $p$ with $\gcd(b,p)=1$, such that
\begin{equation}\label{Eq:(a,b)WSS}
\pi(p^2)=\pi(p).
\end{equation}
\end{defn}

We provide some examples of $(a,b)$-Wall-Sun-Sun primes in Table \ref{T:1}.
\begin{table}[h]
 \begin{center}
\begin{tabular}{cc}
 $(a,b)$ & $\{[p,\pi(p^2)]\}$\\ \hline \\[-8pt]
$(2,1)$ & $\{[13,28],[31,30]\}$\\ [2pt]
$(3,26)$ & $\{[71,126]\}$\\ [2pt]
$(10,41)$ & $\{[29,120]\}$\\ [2pt]
$(11,43)$ & $\{[2,3],[5,24]\}$\\ [2pt]
$(15,14)$ & $\{[29,28]\}$\\ [2pt]
$(23,11)$ & $\{[2,3],[3,3],[71,35]\}$\\ [2pt]
$(25,7)$ & $\{[5,8]\}$\\ [2pt]
$(27,22)$ & $\{[13,84]\}$\\ [2pt]
 \end{tabular}
\end{center}
\caption{$(a,b)$-Wall-Sun-Sun primes $p$ and the corresponding period length $\pi(p^2)=\pi(p)$}
 \label{T:1}
\end{table}

 When $(a,b)=(1,1)$, the sequence $[U_n]$ is the well-known Fibonacci sequence, and the $(a,b)$-Wall-Sun-Sun primes in this case are known simply as \emph{Wall-Sun-Sun} primes \cite{CDP,Wiki2}. However, at the time this article was written, no Wall-Sun-Sun primes were known to exist. The existence of Wall-Sun-Sun primes was first investigated by D. D. Wall \cite{Wall} in 1960, and subsequently studied by the Sun brothers \cite{SunSun}, 
 who showed that the first case of Fermat's Last Theorem is false for exponent $p$ only if $p$ is a Wall-Sun-Sun prime.

 When $b=1$, primes satisfying \eqref{Eq:(a,b)WSS} are also known simply as $a$-Wall-Sun-Sun primes \cite{Wiki1,Wiki2}. We point out that the definition of an $a$-Wall-Sun-Sun prime given in \cite{Wiki1,Wiki2} is a prime $p$ such that
 \begin{equation}\label{Eq:a-WSS}
 U_{\pi(p)}\equiv 0\pmod{p^2}.
 \end{equation}
 In the more general situation of $(a,b)$-Wall-Sun-Sun primes, it is easily seen that condition \eqref{Eq:(a,b)WSS} implies condition \eqref{Eq:a-WSS}. Although it can be shown that the converse is true when $b=1$ \cite{EJ}, 
 the converse is false in general, as can be seen by the counterexample $(a,b)=(5,2)$ with $p=7$. In this particular example, we have $\pi(7)=48$ and $U_{48}\equiv 0 \pmod{49}$, but $\pi(49)=7\pi(7)=336$. Since Wall was originally concerned with whether there exist any primes $p$ such that \eqref{Eq:(a,b)WSS} holds in the case of $(a,b)=(1,1)$, we have chosen to use condition \eqref{Eq:(a,b)WSS}, instead of condition \eqref{Eq:a-WSS}, for our definition of the more general $(a,b)$-Wall-Sun-Sun prime.

Let $\Delta(f)$ and $\Delta(K)$ denote, respectively, the discriminants over $\Q$ of $f(x)\in \Z[x]$ and a number field $K$. We define $f(x)\in \Z[x]$ to be \emph{monogenic} if $f(x)$ is monic, irreducible over $\Q$ and
  \[\Theta=\{1,\theta,\theta^2,\ldots ,\theta^{\deg(f)-1}\}\] is a basis for the ring of integers $\Z_K$ of $K=\Q(\theta)$, where $f(\theta)=0$. If $\Theta$ fails to be a basis for $\Z_K$, we say that $f(x)$ is \emph{non-monogenic}.
 If $f(x)$ is irreducible over $\Q$ with $f(\theta)=0$,  then \cite{Cohen}
\begin{equation} \label{Eq:Dis-Dis}
\Delta(f)=\left[\Z_K:\Z[\theta]\right]^2\Delta(K).
\end{equation}
Observe then, from \eqref{Eq:Dis-Dis}, that $f(x)$ is monogenic if and only if $\Delta(f)=\Delta(K)$.
Thus, if $\Delta(f)$ is squarefree, then $f(x)$ is monogenic from \eqref{Eq:Dis-Dis}. However, the converse does not hold in general, and when $\Delta(f)$ is not squarefree, it can be quite difficult to determine whether $f(x)$ is monogenic.

In this article, we establish a connection between $(a,b)$-Wall-Sun-Sun primes and the monogenicity of certain power-compositional trinomials. More precisely, we prove
\begin{thm}\label{Thm:Main}
Let $f(x)=x^2-ax-b\in \Z[x]$, where $a$ and $b$ satisfy $(*)$. Let $s\ge 1$ be an integer
such that $\gcd(b,s)=1$, $\delta_p\ne 1$ for each prime divisor $p\ge 3$ of $s$ and $\delta_3=-1$ if $3\mid s$,
 where $\delta_p$ is the Legendre symbol $\left(\frac{\D}{p}\right)$. For any integer $n\ge 1$, define $\FF_n(x):=f(x^{s^n})$. Then $\FF_n(x)$ is monogenic if and only if no prime divisor of $s$ is an $(a,b)$-Wall-Sun-Sun prime. 
 \end{thm}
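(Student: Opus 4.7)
I would invoke the standard two-part characterization: $\FF_n$ is monogenic if and only if $\FF_n$ is irreducible over $\Q$ and Dedekind's criterion passes at every rational prime dividing $\Delta(\FF_n)$. The argument splits naturally into some easy reductions and one crux step at primes $p\mid s$.

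\emph{Irreducibility and discriminant.} Since $\D$ is squarefree, $f(x)=x^2-ax-b$ is irreducible over $\Q$, with roots $\alpha,\beta\in L=\Q(\sqrt{\D})$ satisfying $\alpha\beta=-b$. Via a Capelli-style criterion, irreducibility of $\FF_n(x)=f(x^{s^n})$ reduces to checking that $\alpha$ is not a $p$-th power in $L$ for each prime $p\mid s$ (plus a fourth-power condition when $4\mid s^n$); these non-power conditions follow from a norm analysis in $\O_L$ using squarefreeness of $b$, non-splitting of each odd $p\mid s$ in $L$ (from $\delta_p\ne 1$), and $a\not\equiv 0\pmod 4$. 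The classical discriminant formula for composed polynomials then gives
\[
\Delta(\FF_n)\;=\;\pm\, s^{\,2ns^n}\; b^{\,s^n-1}\; \mathrm{disc}(f)^{\,s^n},
\]
so every prime dividing $\Delta(\FF_n)$ divides $s\cdot b\cdot\D$.

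\emph{Easy primes $q\nmid s$.} For primes $q\mid b\cdot\D$ with $q\nmid s$, the map $x\mapsto x^{s^n}$ is a bijection on $\overline{\F_q}^{\times}$, so $\overline{\FF_n}$ inherits its radical structure directly from $\bar f$, and Dedekind's criterion at $q$ follows from squarefreeness of $b$ and $\D$ together with the parity hypothesis $a\not\equiv 0\pmod 4$ (which handles $q=2$), uniformly in $n$ and $s$.

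\emph{The crux: primes $p\mid s$.} For such $p$, write $s=p^k m$ with $\gcd(m,p)=1$. Iterated Frobenius in $\F_p[x]$ yields $\overline{\FF_n}(x)=\bigl(\overline{f(x^{m^n})}\bigr)^{p^{kn}}$. Setting $h(x)=f(x^{m^n})$ and $M(x)=\bigl(\FF_n(x)-h(x)^{p^{kn}}\bigr)/p\in\Z[x]$, Dedekind's criterion at $p$ reduces to $\gcd\bigl(\bar M,\,\overline{\mathrm{rad}(h)}\bigr)=1$ in $\F_p[x]$. Evaluating $M$ at a lift $\alpha$ of a root of $\bar f$ in a suitable completion of $L$ (unramified when $\delta_p=-1$, ramified when $\delta_p=0$), a careful $p$-adic computation collapses this gcd condition to the single congruence
\[
f\bigl(\alpha^{p^{kn}}\bigr)\;\not\equiv\;0\pmod{p^2}.
\]
Using the Lucas description of $\pi(p)$ via the multiplicative behavior of $\alpha$ modulo $p$, together with the elementary fact $\pi(p^2)/\pi(p)\in\{1,p\}$, this congruence is equivalent to $\pi(p^2)\ne\pi(p)$, i.e., to $p$ not being an $(a,b)$-Wall-Sun-Sun prime. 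The exponent $n$ drops out of the final condition, giving the asserted independence of $n$.

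\emph{Main obstacle.} The technical heart is the crux step: translating Dedekind's mod-$p^2$ condition into the Lucas-period language for both the inert case $\delta_p=-1$ and the ramified case $\delta_p=0$ on the same footing. The exceptional prime $p=3$ requires separate treatment — the reason for the extra hypothesis $\delta_3=-1$ when $3\mid s$ — because at $p=3$ a naive $3$-adic expansion of $(f(y^3)-f(y)^3)/3$ picks up an additional low-order term that must be controlled. Linking the cyclic combinatorics of the Frobenius action $\alpha\mapsto\alpha^p$ to the factorization pattern of $\overline{f(x^{m^n})}$ over $\F_p$ is where the essential work lies; everything else is routine bookkeeping parallel to the author's earlier treatment of the case $b=1$.
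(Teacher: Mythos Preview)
Your outline follows essentially the same architecture as the paper's proof: Capelli for irreducibility, Swan's discriminant formula, and Dedekind/Jakhar--Khanduja--Sangwan at each prime, with the crux being the reduction of Dedekind's condition at a prime $p\mid s$ to the congruence $f(\alpha^{p^m})\equiv 0\pmod{p^2}$ and the identification of the latter with the Wall--Sun--Sun condition. That said, there are three points where your sketch departs from, or falls short of, what the paper actually does.

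\textbf{(i) The prime $p=2$.} You fold $2\mid s$ into the general ``crux'' step, but the congruence $f(\alpha^{p^m})\equiv 0\pmod{p^2}$ and its equivalence with $\pi(p^2)=\pi(p)$ are only established in the paper for odd $p$ (the argument uses $\ord_m(\alpha)=\pi(m)$, Euler's criterion, and $\alpha^{p+1}\equiv -b\pmod p$, none of which survive at $p=2$). The paper instead handles $2\mid s$ by a direct case analysis on $(a,b)\pmod 4$ inside item (d) of the JKS criterion, reading off explicitly when $G$ and $H$ share a factor. Your plan needs a separate treatment here.

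\textbf{(ii) The ramified case $\delta_p=0$.} You propose to treat $\delta_p=-1$ and $\delta_p=0$ ``on the same footing'' via the same congruence--WSS equivalence. The paper does \emph{not} do this. The equivalence $\bigl[f(\alpha^{p^m})\equiv 0\pmod{p^2}\bigr]\Leftrightarrow\bigl[p\text{ is WSS}\bigr]$ is proved only under $\delta_p=-1$ (Lemma~3.5), using heavily that $f$ is irreducible mod $p$. For $\delta_p=0$ the paper argues separately that (a) the congruence $f(\alpha^{p^m})\equiv 0\pmod{p^2}$ is \emph{impossible} because it would force $p^2\mid a^2+4b$, contradicting squarefreeness of $\D$; and (b) for $p\ge 5$, $\delta_p=0$ forces $p$ not to be WSS (quoted from an external reference). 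Thus both sides are vacuously true when $\delta_p=0$ and $p\ge 5$, rather than linked by a uniform argument.

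\textbf{(iii) The role of the hypothesis $\delta_3=-1$.} This is not a $3$-adic expansion artifact in the Dedekind reduction as you suggest; Lemma~3.6 works uniformly for all odd $p$. The hypothesis is imposed precisely because the result ``$\delta_p=0\Rightarrow p$ is not WSS'' cited in (ii) is only available for $p\ge 5$. Forcing $\delta_3=-1$ sidesteps the ramified case at $3$ entirely.

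Finally, the paper does not verify Dedekind at primes $q\mid\D$, $q\nmid bs$, directly as you propose for the sufficiency direction; instead it runs an induction on $n$ using the tower $\Q(\alpha)\subset\Q(\alpha^{1/s})\subset\cdots$ together with the discriminant divisibility $\Delta(K_{n-1})^s\mid\Delta(K_n)$, which cuts the list of primes to check down to the divisors of $bs$. Your direct route is viable but is a genuine, if minor, difference in organization.
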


  Theorem \ref{Thm:Main} improves and extends previous work of the author on the special case of $b=1$ \cite{JonesEJM}, which was, in part, originally motivated by recent results of Bouazzaoui \cite{Bouazzaoui1,Bouazzaoui2}. Bouazzaoui showed, under certain conditions on the prime $p\ge 3$,  that
  \[\Q(\sqrt{d}) \quad \mbox{is $p$-rational if and only if} \quad \pi_{(a,b)}(p^2)\ne \pi_{(a,b)}(p),\]
   where $d>0$ is a fundamental discriminant \cite{Wiki0}, $a=\varepsilon+\overline{\varepsilon}$ and $b=-\NN_{\Q(\sqrt{d})/\Q}(\varepsilon)$, with $\varepsilon$ equal to the fundamental unit of $\Q(\sqrt{d})$. We recall, for a prime $p\ge 3$, that a number field $K$ is said to be \emph{$p$-rational} if the Galois group
of the maximal pro-$p$-extension of $K$ which is unramified outside $p$ is a free pro-$p$-group of rank
$r_2 + 1$, where $r_2$ is the number of pairs of complex embeddings of $K$.

\section{Preliminaries}\label{Section:Prelim}
The formula for the discriminant of an arbitrary monic trinomial, due to Swan \cite{Swan}, is given in the following theorem.
\begin{thm}
\label{Thm:Swan}
Let $f(x)=x^N+Ax^M+B\in \Z[x]$, where $0<M<N$. Let $r=\gcd(N,M)$, $N_1=N/r$ and $M_1=M/r$. Then
\begin{equation*}\label{Eq:Del(f)}
\Delta(f)=(-1)^{N(N-1)/2}B^{M-1}D^r,
\end{equation*} where
\begin{equation}\label{Eq:D}
D:=N^{N_1}B^{N_1-M_1}-(-1)^{N_1}M^{M_1}(N-M)^{N_1-M_1}A^{N_1}.
\end{equation}
\end{thm}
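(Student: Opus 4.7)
The plan is to compute $\Delta(\FF_n)$ via Theorem~\ref{Thm:Swan}, establish irreducibility of $\FF_n$, and then analyze the Dedekind index $[\Z_K : \Z[\theta]]$ prime by prime. Applying Theorem~\ref{Thm:Swan} with $N = 2s^n$, $M = s^n$, $A = -a$, $B = -b$ yields $r = s^n$, $N_1 = 2$, $M_1 = 1$, and $D = -s^{2n}(a^2+4b)$, so
\[\Delta(\FF_n) = (-1)^{s^n(2s^n - 1)}(-b)^{s^n - 1}\bigl(-s^{2n}(a^2+4b)\bigr)^{s^n}.\]
Hence any prime whose square divides $\Delta(\FF_n)$ lies among the prime divisors of $s$, $b$, or $\D$. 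Irreducibility of $\FF_n$ follows from Capelli-style criteria for radical extensions: $f$ is irreducible by $\D$ squarefree and positive, and then one verifies that $\alpha$ (a root of $f$ in $L := \Q(\alpha)$) is not a $q$-th power in $L$ for any prime $q \mid s$, and, when $4 \mid s$, that $-4\alpha$ is not a fourth power in $L$. The hypotheses $\delta_q \ne 1$ for odd $q \mid s$, $\delta_3 = -1$ when $3 \mid s$, $a \not\equiv 0 \pmod 4$, and $b$ squarefree are exactly what is needed to block these $q$-th powers, via the norm $N_{L/\Q}(\alpha) = -b$ and standard reduction arguments.

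Under $(*)$, the ring of integers of $L$ is $\Z[\alpha]$, because $a$ odd forces $\D \equiv 1 \pmod 4$ while $a \equiv 2 \pmod 4$ with $b$ squarefree forces $\D \not\equiv 1 \pmod 4$. I then dispatch the ``easy'' primes $p \mid \Delta(\FF_n)$ with $p \nmid s$. For $p \mid b$, the squarefreeness of $b$ gives $\bar \FF_n(x) \equiv x^{s^n}(x^{s^n} - a) \pmod p$, and Dedekind's criterion reduces to checking that $-b/p \not\equiv 0 \pmod p$ (the constant term of the Dedekind polynomial), which is automatic. For $p \mid \D$ with $p \nmid s$, $f$ has a double root $r \pmod p$, giving $\bar\FF_n(x) \equiv (x^{s^n} - r)^2 \pmod p$; here Dedekind's criterion reduces to a single unit condition controlled by $\D/p$ and by $p \nmid s$.

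The essential case is a prime $p \mid s$. Writing $s = pm$ and using the Frobenius identity $g(x^p) \equiv g(x)^p \pmod p$ in $\F_p[x]$, one gets $\bar\FF_n(x) \equiv \bar f(x^{m^n})^{p^n} \pmod p$. The Dedekind polynomial is
\[M(x) = \frac{\FF_n(x) - f(x^{m^n})^{p^n}}{p} \in \Z[x],\]
and evaluating at a lift $\xi$ of a root of $\bar f(x^{m^n})$ in $\overline{\F_p}$, with $\tilde\alpha := \xi^{m^n}$ a lift of a root of $\bar f$ (so $\tilde\alpha$ matches $\alpha$ after identification), gives $\bar M(\xi) \equiv f(\alpha^{p^n})/p \pmod p$. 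Dedekind's criterion therefore says
\[p \nmid [\Z_K : \Z[\theta]] \iff p^2 \nmid f(\alpha^{p^n}) \text{ in } \O_L.\]
So the theorem reduces to proving, for each prime $p \mid s$, the equivalence $p^2 \mid f(\alpha^{p^n})$ in $\O_L$ if and only if $p$ is an $(a,b)$-Wall-Sun-Sun prime, uniformly in $n \ge 1$.

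The main obstacle is establishing this equivalence uniformly in $n$. The hypotheses $\delta_p \ne 1$ (with $\delta_3 = -1$ when $p = 3$) ensure $p$ is inert or ramified in $L$, eliminating the split case. In the inert case ($p$ odd, $\delta_p = -1$), Frobenius gives $\alpha^p \equiv \beta \pmod p$, so I write $\alpha^p = \beta + p\gamma$ and compute $f(\alpha^p) \equiv -p(\alpha - \beta)\gamma \pmod{p^2}$; since $v_p(\alpha - \beta) = 0$, this gives $p^2 \mid f(\alpha^p) \iff \gamma \equiv 0 \pmod p \iff \alpha^p \equiv \beta \pmod{p^2}$. The latter is equivalent, via comparing the order of $\alpha$ in $(\O_L/p)^\times$ and $(\O_L/p^2)^\times$, to $\pi(p^2) = \pi(p)$, i.e., $p$ is Wall-Sun-Sun. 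To pass from $n = 1$ to general $n$, the key collapse is $(y + p\gamma)^p \equiv y^p \pmod{p^2}$, which gives $\alpha^{p^n} \equiv \alpha^p$ or $\beta^p \pmod{p^2}$ according to the parity of $n$; in both cases the analogous computation of $f(\alpha^{p^n}) \pmod{p^2}$ shows $v_p(f(\alpha^{p^n})) \ge 2$ is equivalent to the same condition $\gamma \equiv 0 \pmod p$, yielding the required independence from $n$. The ramified case ($\delta_p = 0$) and the case $p = 2$ are handled by parallel arguments, where the exclusion of ramified $p = 3$ and the restriction $a \not\equiv 0 \pmod 4$ are used to keep the structure of $\O_L/p^2$ tractable enough to translate the Dedekind condition back into the $(a,b)$-Wall-Sun-Sun condition.
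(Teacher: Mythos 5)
Your proposal does not address the statement you were asked to prove. The statement is Swan's formula for the discriminant of a general trinomial $x^N+Ax^M+B$, namely $\Delta(f)=(-1)^{N(N-1)/2}B^{M-1}D^r$ with $D$ as in \eqref{Eq:D}. What you have written is instead an outline of a proof of the paper's main result (Theorem \ref{Thm:Main}) on the monogenicity of $\FF_n(x)=f(x^{s^n})$ and its relation to $(a,b)$-Wall-Sun-Sun primes; indeed, your very first step is to \emph{apply} Theorem \ref{Thm:Swan} with $N=2s^n$, $M=s^n$, $A=-a$, $B=-b$, so you are using the statement as a known tool rather than proving it. (For the record, the paper itself does not prove this theorem either: it is quoted from Swan's 1962 paper on factorization of polynomials over finite fields.)

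A proof of the actual statement would have to compute $\Delta(f)=(-1)^{N(N-1)/2}N_{K/\Q}(f'(\theta))$ (up to the standard normalization via the resultant $\operatorname{Res}(f,f')$), for instance by evaluating $f'(x)=Nx^{N-1}+MAx^{M-1}$ at the roots of $f$, eliminating high powers of each root using the relation $\theta^N=-A\theta^M-B$, and reducing to the case $r=\gcd(N,M)=1$ by writing $f(x)=g(x^r)$ and tracking how discriminants and resultants behave under that substitution --- this is where the exponent $r$ on $D$ comes from. None of this machinery appears in your write-up, so as a proof of the stated theorem it is not a partial argument with a gap; it is an argument for a different proposition entirely.
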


The next two theorems are due to Capelli \cite{S}.
 \begin{thm}\label{Thm:Capelli1}  Let $f(x)$ and $h(x)$ be polynomials in $\Q[x]$ with $f(x)$ irreducible. Suppose that $f(\alpha)=0$. Then $f(h(x))$ is reducible over $\Q$ if and only if $h(x)-\alpha$ is reducible over $\Q(\alpha)$.
 \end{thm}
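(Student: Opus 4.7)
The plan is to translate both sides of the equivalence into statements about the degree of a carefully chosen root $\beta$ of $f(h(x))$ over $\Q$, and then compare via the tower law applied to $\Q\subseteq\Q(\alpha)\subseteq\Q(\beta)$. The starting observation is that any root of $f(h(x))$ is mapped by $h$ to a root of $f$, and since $f$ is irreducible over $\Q$ all its roots are Galois conjugates of $\alpha$; by applying a suitable automorphism of a splitting field, I can always locate a root $\beta$ of $f(h(x))$ with $h(\beta)=\alpha$. For such a $\beta$, the minimal polynomial of $\beta$ over $\Q(\alpha)$ must divide $h(x)-\alpha$, so $[\Q(\beta):\Q(\alpha)]\le \deg h$, with equality if and only if $h(x)-\alpha$ is irreducible in $\Q(\alpha)[x]$. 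Since $\deg f(h(x))=\deg f\cdot \deg h=[\Q(\alpha):\Q]\cdot \deg h$, the tower law yields $[\Q(\beta):\Q]\le \deg f(h(x))$, and reducibility of $f(h(x))$ over $\Q$ corresponds to strict inequality here.

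For the direction $(\Leftarrow)$, I would write $h(x)-\alpha=p(x)q(x)$ in $\Q(\alpha)[x]$ with $0<\deg p<\deg h$, pick $\beta$ a root of $p(x)$, and note that its minimal polynomial over $\Q(\alpha)$ divides $p(x)$ and hence has degree at most $\deg p<\deg h$. The tower law then forces $[\Q(\beta):\Q]<\deg f(h(x))$, so the minimal polynomial of $\beta$ over $\Q$ is a proper divisor of $f(h(x))$ in $\Q[x]$, witnessing reducibility. For the direction $(\Rightarrow)$, I would assume $h(x)-\alpha$ is irreducible over $\Q(\alpha)$ and let $\beta$ be any root. Then $[\Q(\beta):\Q(\alpha)]=\deg h$, so $[\Q(\beta):\Q]=\deg f\cdot \deg h=\deg f(h(x))$; hence the minimal polynomial of $\beta$ over $\Q$ has the same degree as $f(h(x))$ and divides it, so the two must agree up to a scalar, proving $f(h(x))$ is irreducible over $\Q$.

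The argument reduces essentially to bookkeeping with the tower law, so no deep obstacle is expected. The two steps needing care are (i) the use of Galois conjugation to arrange a root $\beta$ of $f(h(x))$ with $h(\beta)=\alpha$ rather than $h(\beta)$ equal to some other conjugate $\alpha_i$ of $\alpha$, and (ii) verifying that $h(x)-\alpha$ has degree exactly $\deg h$ in $\Q(\alpha)[x]$, which is automatic since $h\in \Q[x]\subseteq \Q(\alpha)[x]$ contributes its leading coefficient in $\Q$.
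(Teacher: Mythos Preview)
The paper does not supply a proof of this statement: it is quoted as a classical result of Capelli, with a reference to Schinzel's monograph \cite{S}, and is used as a black box in the proof of Lemma~\ref{Lem:Irreducible}. So there is no ``paper's own proof'' to compare against.

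Your argument is the standard tower-law proof and is correct. A few remarks: the observation in your opening paragraph about using a Galois automorphism to arrange $h(\beta)=\alpha$ is never actually needed, since in both directions you begin by choosing $\beta$ as a root of (a factor of) $h(x)-\alpha$, which forces $h(\beta)=\alpha$ directly. Also, in the $(\Leftarrow)$ direction you should note explicitly that $\alpha=h(\beta)\in\Q(\beta)$, so the tower $\Q\subseteq\Q(\alpha)\subseteq\Q(\beta)$ is legitimate; you use this implicitly but it deserves a sentence. With those minor clarifications the proof is complete.
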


\begin{thm}\label{Thm:Capelli2}  Let $c\in \Z$ with $c\geq 2$, and let $\alpha\in\C$ be algebraic.  Then $x^c-\alpha$ is reducible over $\Q(\alpha)$ if and only if either there is a prime $p$ dividing $c$ such that $\alpha=\beta^p$ for some $\beta\in\Q(\alpha)$ or $4\mid c$ and $\alpha=-4\beta^4$ for some $\beta\in\Q(\alpha)$.
\end{thm}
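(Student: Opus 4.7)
The plan is to combine Capelli (Theorems \ref{Thm:Capelli1}--\ref{Thm:Capelli2}) for irreducibility, Swan (Theorem \ref{Thm:Swan}) for the discriminant, and Dedekind's criterion at every prime factor of the discriminant to test monogenicity. The hypotheses $\gcd(b,s)=1$, $\delta_p\ne 1$ for each odd prime $p\mid s$, and $\delta_3=-1$ when $3\mid s$ are engineered so that Dedekind's criterion is automatically satisfied at every prime except possibly those dividing $s$, concentrating the entire monogenicity question at the prime divisors of $s$.

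Because $\D$ is squarefree, $f(x)$ is irreducible over $\Q$; fix a root $\theta$ and set $K=\Q(\theta)$. Iterating Theorem \ref{Thm:Capelli1} reduces the irreducibility of $\FF_n$ to the irreducibility of $x^{s^n}-\theta$ over $K$, which by Theorem \ref{Thm:Capelli2} further reduces to showing that $\theta$ is not a $p$-th power in $K$ for any prime $p\mid s$, together with an auxiliary check when $4\mid s^n$. A $\mathfrak p$-adic valuation argument handles this: if $\delta_p=-1$ then $p$ is inert in $\Z[\theta]$ and $v_{\mathfrak p}(\theta)$ is odd and hence not divisible by $p$, while $\delta_p=0$ places $\theta$ in a ramified prime where a similar valuation obstruction applies; the hypothesis $a\not\equiv 0\pmod{4}$ handles the $\theta=-4\gamma^4$ exception. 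Applying Theorem \ref{Thm:Swan} with $N=2s^n$, $M=s^n$, $A=-a$, $B=-b$ yields $D=-s^{2n}(a^2+4b)$, so that $\Delta(\FF_n)=\pm b^{s^n-1}\bigl(-s^{2n}(a^2+4b)\bigr)^{s^n}$, whose prime support lies in the divisors of $2bs\D$. Since $b$ and $\D$ are squarefree and coprime to $s$, Dedekind's criterion at each prime $q\mid b\D$ reduces to a direct inspection of $\overline{\FF_n}(x)\bmod q$, and $q=2$ is handled separately using $a\not\equiv 0\pmod{4}$ together with the parity of $b$.

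The crux of the proof, and the main obstacle I foresee, is Dedekind's criterion at each prime $p\mid s$. Writing $s=p^e m$ with $\gcd(p,m)=1$, in $\F_p[x]$ we have
\[
\overline{\FF_n}(x)=\overline{f}(x^{m^n})^{p^{en}},
\]
and the hypothesis $\delta_p\in\{-1,0\}$ forces $\overline{f}(x^{m^n})$ to factor into pairwise distinct irreducibles, so the radical of $\overline{\FF_n}$ is precisely $\overline{f}(x^{m^n})$. Lifting this factorization to $G(x)\in\Z[x]$ and forming $M(x)=(\FF_n(x)-G(x))/p$, Dedekind's criterion becomes the non-vanishing of $\overline{M}$ modulo every irreducible factor of $\overline{f}(x^{m^n})$. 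I plan to compute these residues by evaluating at a root $\alpha$ of $f$ in $\Z_K$ and translating via the Binet identity $U_k=(\alpha^k-\beta^k)/(\alpha-\beta)$ into the statement $U_{\pi(p)}\not\equiv 0\pmod{p^2}$, which under the hypothesis $\delta_p\in\{-1,0\}$ is equivalent to $\pi(p^2)\ne\pi(p)$, that is, to $p$ not being an $(a,b)$-Wall-Sun-Sun prime. Critically, the outer exponent $p^{en}$ collapses under Frobenius, so the computation in the quotient $\F_p[x]/\overline{f}(x^{m^n})$ reduces to the same Wall-Sun-Sun test for every $n\ge 1$, yielding the advertised independence of $n$.
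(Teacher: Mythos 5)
Your proposal does not address the statement you were asked to prove. The statement is Capelli's classical criterion: for an algebraic number $\alpha$ and an integer $c\ge 2$, the binomial $x^c-\alpha$ is reducible over $\Q(\alpha)$ if and only if either $\alpha=\beta^p$ for some prime $p\mid c$ and some $\beta\in\Q(\alpha)$, or $4\mid c$ and $\alpha=-4\beta^4$. What you have written instead is an outline of a proof of Theorem \ref{Thm:Main}, the paper's main result on the monogenicity of $\FF_n(x)=f(x^{s^n})$ --- you invoke Theorem \ref{Thm:Capelli2} as a known tool in your second paragraph rather than proving it. Nothing in your sketch engages with the actual content of the statement: there is no argument showing that $x^c-\alpha$ is irreducible when $\alpha$ is not a $p$-th power for any $p\mid c$ (and not of the form $-4\beta^4$ when $4\mid c$), which is the nontrivial direction and requires a genuine induction on the prime factorization of $c$ together with a norm argument in the tower $\Q(\alpha)\subset\Q(\alpha^{1/p})$.

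For what it is worth, the paper does not prove this statement either: it is quoted as a known theorem of Capelli with a citation to Schinzel's book, so there is no internal proof to compare against. But a correct blind proof would have to establish the binomial irreducibility criterion itself --- for instance, by first handling the prime-power case $c=p^k$ (showing $x^{p}-\alpha$ irreducible forces $x^{p^k}-\alpha$ irreducible unless the $-4\beta^4$ obstruction arises when $p=2$), then assembling the general case multiplicatively. Your text contains none of this; it should be discarded and rewritten for the correct statement.
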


The next proposition follows from Proposition 1 in \cite{Y}.
\begin{prop}\label{Prop:Yokoi}
Let $b=1$. Then $\alpha=(a+\sqrt{a^2+4})/2$ is the fundamental unit of $\Q(\sqrt{\D})$ with $\NN(\alpha)=-1$, where $\NN:=\NN_{\Q(\alpha)/\Q}$ denotes the algebraic norm.
\end{prop}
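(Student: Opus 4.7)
The plan is to prove the proposition in two parts: first, that $\alpha$ is a unit of $\Z_K$ with $\NN(\alpha)=-1$ (where $K=\Q(\sqrt{\D})$), and second, that $\alpha$ is fundamental, i.e., any expression $\alpha=\varepsilon^k$ with $\varepsilon>1$ a unit forces $k=1$.

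The first part is routine once one notes that $\alpha$ is a root of $x^2-ax-1$, whence $\NN(\alpha)=-1$ and $\alpha>1$; one then splits on the parity of $a$. If $a$ is odd, then $\D=a^2+4\equiv 1\pmod 4$, so $\Z_K=\Z[(1+\sqrt{\D})/2]$, and $\alpha=(a-1)/2+(1+\sqrt{\D})/2\in\Z_K$. If $a=2a'$ is even (with $a'$ odd, since $a\not\equiv 0\pmod 4$), then $a^2+4=4\D$, so $\alpha=a'+\sqrt{\D}$; here $\D\equiv 2\pmod 4$, $\Z_K=\Z[\sqrt{\D}]$, and again $\alpha\in\Z_K$.

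The main work is the second part. Let $\varepsilon>1$ be the fundamental unit, and write $\alpha=\varepsilon^k$. Taking norms gives $\NN(\varepsilon)^k=-1$, forcing $\NN(\varepsilon)=-1$ and $k$ odd. Expand $\varepsilon$ in the integral basis as $\varepsilon=(u+v\sqrt{\D})/2$ in Case 1, or $\varepsilon=u+v\sqrt{\D}$ in Case 2. Using $(\alpha-\overline\alpha)^2=a^2+4$ in each case, one finds that $\alpha-\overline\alpha$ and $\varepsilon-\overline\varepsilon$ are integer multiples of $\sqrt{\D}$ differing by the factor $v$, whence
\[
\frac{\varepsilon^k-\overline\varepsilon^k}{\varepsilon-\overline\varepsilon}=\frac{1}{v}.
\]
The left-hand side is symmetric in $\varepsilon,\overline\varepsilon$ and hence a rational integer (the $k$-th term of the integer recurrence $M_n=(\varepsilon+\overline\varepsilon)M_{n-1}-(\varepsilon\overline\varepsilon)M_{n-2}$ with $M_0=0,M_1=1$); it is also positive, since $\varepsilon>1$, $\overline\varepsilon=-1/\varepsilon<0$, and $k$ is odd. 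Hence $v=1$.

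Substituting $v=1$ into $\NN(\varepsilon)=-1$ reduces the norm equation to $u^2=a^2$ in Case 1 and $u^2=(a')^2$ in Case 2. The negative choice of $u$ yields an element of $(0,1)$, contradicting $\varepsilon>1$, so $\varepsilon=\alpha$ and $k=1$. I expect the main obstacle to be the integrality of the symmetric ratio above; everything else is a direct calculation within the integral basis dictated by conditions $(*)$.
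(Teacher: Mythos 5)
Your proof is correct, but it is worth noting that the paper does not actually prove this proposition at all: it is stated as following from Proposition~1 of Yokoi's paper \cite{Y}, so you have supplied a self-contained argument where the paper relies on a citation. Your argument is the classical one and all the steps check out. The integral-basis computations are right: for $a$ odd, $\D=a^2+4\equiv 5\pmod 8$ so $\Z_K=\Z[(1+\sqrt{\D})/2]$ and $\alpha=(a-1)/2+(1+\sqrt{\D})/2$; for $a\equiv 2\pmod 4$, $\D=(a')^2+1\equiv 2\pmod 4$ so $\Z_K=\Z[\sqrt{\D}]$ and $\alpha=a'+\sqrt{\D}$ --- and this second case is exactly where the hypothesis $a\not\equiv 0\pmod 4$ from $(*)$ is doing real work, since for $a=4$ one gets $\D=5\equiv 1\pmod 4$, the half-integer units appear, and the conclusion genuinely fails ($2+\sqrt{5}=\varepsilon^3$), as the paper's own remark after Lemma~3.1 points out. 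The key step, that
\[
\frac{\varepsilon^{k}-\overline{\varepsilon}^{\,k}}{\varepsilon-\overline{\varepsilon}}=\frac{1}{v}
\]
forces $v=1$, is sound: the left side is the $k$-th term of an integer linear recurrence with coefficients $\tr(\varepsilon)\in\Z$ and $-\NN(\varepsilon)=1$, and it is positive because $k$ is odd, $\varepsilon>1$ and $\overline{\varepsilon}=-1/\varepsilon\in(-1,0)$; the same inequality gives $v>0$, so a positive integer equals $1/v$ and $v=1$. The endgame is also fine: the rejected root $u=-a$ (resp.\ $u=-a'$) gives $\varepsilon=-\overline{\alpha}=1/\alpha\in(0,1)$, contradicting $\varepsilon>1$. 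One tiny point you could make explicit in Case~1 is that $v=1$ together with the parity constraint $u\equiv v\pmod 2$ for elements of $\Z[(1+\sqrt{\D})/2]$ forces $u$ odd, which is consistent with $u=a$; nothing breaks, but stating it closes the loop. In short: a correct elementary proof of a statement the paper outsources to the literature.
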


In the sequel, for an integer $m\ge 2$, we let $\ord_m(*)$ denote the order of $*$ modulo $m$, and we define $(a,b)_m:=(a \pmod{m},b \pmod{m})$.
For brevity of notation, we also define
\[\lambda:=\ord_p(b^2) \quad\mbox{and}\quad \delta_p:=\left(\frac{\D}{p}\right),\]
 where $\left(\frac{\D}{p}\right)$ is the Legendre symbol.

 The following theorem is a compilation of results from various sources.
\begin{thm}\label{Thm:Period}
  Let $[U_n]$ be the Lucas sequence as defined in \eqref{Eq:Lucas}.
  Let $p$ be a prime with $b\not \equiv 0 \pmod{p}$.
  \begin{enumerate}
  \item \label{I-1} $\pi(p)=2$ if and only if $(a,b)_p=(0,1)$.
   \item \label{I0:p=2} If $p=2$, then \[\pi(2)=\left\{\begin{array}{cl}
    2 & \mbox{if $(a,b)_4\in \{(2,1),(2,3)\}$}\\
    3 & \mbox{if $(a,b)_4\in \{(1,1),(1,3),(3,1),(3,3)\}$}
  \end{array}\right.\]
  \[  \mbox{and}\quad \pi(4)=\left\{\begin{array}{cl}
    3 & \mbox{if $(a,b)_4=(3,3)$}\\
    4 & \mbox{if $(a,b)_4\in \{(2,1),(2,3)\}$}\\
    6 & \mbox{if $(a,b)_4\in \{(1,1),(1,3),(3,1)\}$.}
  \end{array}\right.\]
    \item  \label{I4:R} If $p\ge 3$, then $\pi(p^2)\in \{\pi(p),p\pi(p)\}$.
    \item  \label{I2:QNR} If $\delta_p=-1$, then $2(p+1)\lambda\equiv 0 \pmod{\pi(p)}$.
     \end{enumerate}
\end{thm}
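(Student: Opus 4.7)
The plan is to handle the four items essentially independently: the first few terms of the recurrence dispatch (a) and (b), a companion-matrix lifting argument handles (c), and Binet's formula in $\F_{p^2}$ handles (d). For (a), observe $U_2=a$ and $U_3=a^2+b$. If $\pi(p)=2$ then $U_2\equiv 0$ and $U_3\equiv 1 \pmod p$, forcing $(a,b)_p=(0,1)$; conversely, if $(a,b)_p=(0,1)$, the recurrence modulo $p$ becomes $U_n\equiv U_{n-2}$, so $\pi(p)\mid 2$, with equality since $U_0\ne U_1$. For (b), the six residue classes of $(a,b)_4$ listed are each dispatched by a finite tabulation of $U_n\pmod{2}$ and $U_n\pmod{4}$ until one reaches the first $n\ge 1$ with $(U_n,U_{n+1})\equiv (0,1)$.

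For (c), introduce the companion matrix $M=\begin{pmatrix} a & b \\ 1 & 0\end{pmatrix}$. A short induction gives
\[
M^n=\begin{pmatrix} U_{n+1} & bU_n \\ U_n & bU_{n-1}\end{pmatrix},
\]
and since $\gcd(b,m)=1$, the multiplicative order of $M$ in $GL_2(\Z/m\Z)$ agrees with $\pi(m)$: the conditions $U_n\equiv 0$ and $U_{n+1}\equiv 1 \pmod m$ are equivalent to $M^n\equiv I$ once the $(2,2)$-entry is recovered via the recurrence. Setting $k=\pi(p)$, write $M^k=I+pN$ with $N\in M_2(\Z)$; the binomial theorem yields
\[
M^{pk}=(I+pN)^p=I+p^2N+\sum_{j=2}^{p}\binom{p}{j}p^j N^j\equiv I\pmod{p^2},
\]
since every term past $I$ is divisible by $p^2$. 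Hence $\pi(p^2)\mid p\pi(p)$. Combined with the obvious $\pi(p)\mid \pi(p^2)$ (reduce the period modulo $p^2$ to a period modulo $p$) and the primality of $p$, this forces $\pi(p^2)\in\{\pi(p),\,p\pi(p)\}$.

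For (d), let $\alpha,\beta$ be the roots of $x^2-ax-b$, which lie in $\F_{p^2}\setminus \F_p$ because $\delta_p=-1$. The nontrivial element of $\Gal(\F_{p^2}/\F_p)$ is Frobenius, so $\beta=\alpha^p$, whence $\alpha^{p+1}=\alpha\beta=-b$ in $\F_{p^2}$. Raising to the $2\lambda$-th power gives $\alpha^{2(p+1)\lambda}=b^{2\lambda}=1$ by the definition $\lambda=\ord_p(b^2)$, and the same equality holds for $\beta$. Since $\D\not\equiv 0\pmod p$, Binet's formula $U_n=(\alpha^n-\beta^n)/(\alpha-\beta)$ is valid in $\F_{p^2}$, so any common exponent annihilating $\alpha$ and $\beta$ is a period of $[U_n]\pmod p$; thus $\pi(p)\mid 2(p+1)\lambda$. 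The step requiring the most care is the identification of $\pi(m)$ with the order of $M$ in $GL_2(\Z/m\Z)$ in (c): this leans essentially on $\gcd(b,m)=1$ to recover $U_{n-1}$ from $U_n$ and $U_{n+1}$ via the recurrence, and once it is in place, the lift from $p$ to $p^2$ is just the binomial expansion above.
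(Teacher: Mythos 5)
Your argument is correct, but it is genuinely more self-contained than the paper's: the paper dismisses items (a) and (b) as obvious/direct calculation and simply cites the literature for the two substantive items, pointing to Renault for (c) and to Gupta--Rockstroh--Su for (d). Your treatment of (a) and (b) matches the paper's in spirit (a two-term check, then a finite tabulation over the residue classes of $(a,b)$ modulo $4$). For (c), your companion-matrix argument --- identifying $\pi(m)$ with the order of $M$ in $GL_2(\Z/m\Z)$ via $M^n=\left(\begin{smallmatrix} U_{n+1} & bU_n\\ U_n & bU_{n-1}\end{smallmatrix}\right)$, writing $M^{\pi(p)}=I+pN$, and expanding $(I+pN)^p\equiv I\pmod{p^2}$ --- is the standard proof of the cited result and is complete as written; the divisibility bookkeeping ($\pi(p)\mid\pi(p^2)$ and $\pi(p^2)\mid p\,\pi(p)$ forcing $\pi(p^2)\in\{\pi(p),p\pi(p)\}$) is handled correctly, and in fact your argument never uses $p\ge 3$, so it is slightly more general than the stated item. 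For (d), using Frobenius to get $\beta=\alpha^p$, hence $\alpha^{p+1}=\alpha\beta=-b$ in $\F_{p^2}$, and then killing both roots by the exponent $2(p+1)\lambda$ via $\lambda=\ord_p(b^2)$, is exactly the splitting-field mechanism behind the cited theorem; note that this same identity $\alpha^{p+1}\equiv -b\pmod p$ is reproved later in the paper as item (b) of Lemma 3.6 by a binomial-expansion computation, so your Galois-theoretic shortcut is consistent with, and cleaner than, what the paper does downstream. What your approach buys is a reference-free proof of the whole theorem; what it costs is nothing, since each step is short. The only minor point worth making explicit is that the set of $N$ with $U_{n+N}\equiv U_n$ for all $n$ is closed under $\gcd$ (the sequence is purely periodic because $\gcd(b,p)=1$ makes the recurrence invertible), which is what upgrades ``$2(p+1)\lambda$ is a period'' to ``$\pi(p)$ divides $2(p+1)\lambda$''; you use this implicitly in both (c) and (d).
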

\begin{proof}
Item \ref{I-1} is obvious. Item \ref{I0:p=2} follows easily by direct calculation, recalling that $a\not \equiv 0 \pmod{4}$ from conditions $(*)$. 
 Item \ref{I4:R} can be found in \cite{Renault}, while item \ref{I2:QNR} follows from a theorem in \cite{GRS}. 
\end{proof}

The following theorem, known as \emph{Dedekind's Index Criterion}, or simply \emph{Dedekind's Criterion} if the context is clear, is a standard tool used in determining the monogenicity of a polynomial.
\begin{thm}[Dedekind \cite{Cohen}]\label{Thm:Dedekind}
Let $K=\Q(\theta)$ be a number field, $T(x)\in \Z[x]$ the monic minimal polynomial of $\theta$, and $\Z_K$ the ring of integers of $K$. Let $p$ be a prime number and let $\overline{ * }$ denote reduction of $*$ modulo $p$ (in $\Z$, $\Z[x]$ or $\Z[\theta]$). Let
\[\overline{T}(x)=\prod_{i}\overline{\tau_i}(x)^{e_i}\]
be the factorization of $T(x)$ modulo $p$ in $\F_p[x]$, and set
\[g(x)=\prod_{i}\tau_i(x),\]
where the $\tau_i(x)\in \Z[x]$ are arbitrary monic lifts of the $\overline{\tau_i}(x)$. Let $h(x)\in \Z[x]$ be a monic lift of $\overline{T}(x)/\overline{g}(x)$ and set
\[F(x)=\dfrac{g(x)h(x)-T(x)}{p}\in \Z[x].\]
Then
\[\left[\Z_K:\Z[\theta]\right]\not \equiv 0 \pmod{p} \Longleftrightarrow \gcd\left(\overline{F},\overline{g},\overline{h}\right)=1 \mbox{ in } \F_p[x].\]
\end{thm}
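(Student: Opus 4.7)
My plan is to establish monogenicity of $\FF_n(x)$ by analyzing the index $[\Z_K:\Z[\theta_n]]$ one prime at a time, where $\theta_n$ is a root of $\FF_n$ and $K=\Q(\theta_n)$. The argument splits naturally into (i) irreducibility and discriminant, (ii) Dedekind's Criterion at primes $q\nmid s$, and (iii) Dedekind's Criterion at primes $p\mid s$, where the Wall-Sun-Sun condition emerges.

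For (i), Theorems \ref{Thm:Capelli1} and \ref{Thm:Capelli2} reduce irreducibility of $f(x^{s^n})$ to checking that a root $\theta$ of $f$ is not a $p$-th power in $\Q(\theta)$ for any prime $p\mid s^n$, and is not of the form $-4\beta^4$ if $4\mid s^n$. The norm identity $\NN(\theta)=-b$ combined with $b$ squarefree and positive, together with the hypotheses $\delta_p\ne 1$ for odd primes $p\mid s$ and the auxiliary $\delta_3=-1$, rules these out. Applying Theorem \ref{Thm:Swan} with $N=2s^n$, $M=s^n$, $A=-a$, $B=-b$ gives, up to sign,
\[
\Delta(\FF_n)\;=\;\pm\,b^{\,s^n-1}\,s^{\,2ns^n}\,(a^2+4b)^{s^n},
\]
so the only primes that can obstruct monogenicity are those dividing $bs\D$.

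For (ii), fix a prime $q\mid b\D$ with $q\nmid s$. The squarefreeness of $b$ and $\D$ built into $(*)$ allows a direct application of Theorem \ref{Thm:Dedekind} to the reduction $\overline{\FF_n}(x)\in\F_q[x]$: when $q\mid b$, the only repeated irreducible factor of $\overline{\FF_n}$ is $x$, and squarefreeness of $b$ forces $\overline{F}(0)\ne 0$; an analogous argument (with the repeated factor arising from the double root of $\overline{f}$) handles $q\mid\D$ using squarefreeness of $\D$. The prime $q=2$ is treated separately using Theorem \ref{Thm:Period}(b) and the parity restrictions in $(*)$.

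For (iii), fix a prime $p\mid s$ and write $s^n=p^k t$ with $\gcd(t,p)=1$. The Frobenius identity $g(y)^p=g(y^p)$ in $\F_p[x]$ yields
\[
\overline{\FF_n}(x)\;=\;\overline{f}(x^t)^{p^k}\quad\text{in }\F_p[x],
\]
and the hypothesis $\delta_p\in\{0,-1\}$ (with $\delta_3=-1$ forced when $p=3$) makes the irreducible factorization of $\overline{f}(x^t)$ in $\F_p[x]$ transparent. Choosing Dedekind lifts $g,h$ built from this factorization and exploiting the key identity $\theta^m=U_m\theta+bU_{m-1}$ (satisfied by any root of $f$) to translate powers of $\theta$ modulo $p^2$ into Lucas-sequence values, a careful computation reduces $\gcd(\overline{F},\overline{g},\overline{h})=1$ in $\F_p[x]$ to the single requirement $\pi(p^2)\ne\pi(p)$. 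By Theorem \ref{Thm:Period}(c) this is exactly the condition that $p$ is \emph{not} an $(a,b)$-Wall-Sun-Sun prime, proving both directions simultaneously. The main obstacle will be this last computation: identifying the correct Dedekind lifts, propagating the Lucas-sequence identity through $\overline{F}(x)\bmod p$, and verifying that the resulting condition is independent of $n$, which in particular explains the theorem's assertion that monogenicity does not depend on $n$.
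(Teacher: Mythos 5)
Your proposal does not address the statement you were asked to prove. The statement is Theorem \ref{Thm:Dedekind}, Dedekind's Index Criterion: for a number field $K=\Q(\theta)$ with $\theta$ having minimal polynomial $T(x)$, the prime $p$ does not divide the index $\left[\Z_K:\Z[\theta]\right]$ if and only if $\gcd(\overline{F},\overline{g},\overline{h})=1$ in $\F_p[x]$, where $F$, $g$, $h$ are built from the factorization of $\overline{T}$ modulo $p$. A proof of this would involve, for instance, comparing the $p$-maximality of the order $\Z[\theta]$ with the structure of $\Z[x]/(p, T(x))$, or equivalently computing the radical of $p\Z[\theta]$ and testing whether the ring $\Z[\theta]$ is $p$-maximal via the conductor; none of that appears in what you wrote. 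Instead, you have sketched a proof strategy for Theorem \ref{Thm:Main}, the paper's main result on the monogenicity of $\FF_n(x)=f(x^{s^n})$ and its relation to $(a,b)$-Wall-Sun-Sun primes. Dedekind's criterion is an \emph{input} to that argument (it is one of the tools the paper applies in Lemma \ref{Lem:Main2} and in the proof of the main theorem), not the thing to be proved here.

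For what it is worth, the paper itself gives no proof of Theorem \ref{Thm:Dedekind}: it is a classical result attributed to Dedekind and cited from Cohen's book \cite{Cohen}, so the expected ``proof'' is essentially a citation or a reproduction of the standard argument from the literature. If you want to supply an actual proof, the standard route is to show that $p\nmid\left[\Z_K:\Z[\theta]\right]$ holds exactly when $\Z[\theta]$ is $p$-maximal, and that $p$-maximality fails precisely when some irreducible factor $\overline{\tau_i}$ with $e_i\ge 2$ divides all three of $\overline{F}$, $\overline{g}$, $\overline{h}$; this is carried out by analyzing the ideal $(p,g(\theta)h(\theta))$ and the quotient $\Z[\theta]/p\Z[\theta]\cong\F_p[x]/(\overline{T})$. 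Your sketch of the main theorem, whatever its merits, cannot be graded as a proof of this statement.
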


The next result is essentially an algorithmic adaptation of Theorem \ref{Thm:Dedekind} specifically for trinomials. 
\begin{thm}{\rm \cite{JKS2}}\label{Thm:JKS}
Let $N\ge 2$ be an integer.
Let $K=\Q(\theta)$ be an algebraic number field with $\theta\in \Z_K$, the ring of integers of $K$, having minimal polynomial $f(x)=x^{N}+Ax^M+B$ over $\Q$, with $\gcd(M,N)=r$, $N_1=N/r$ and $M_1=M/r$. Let $D$ be as defined in \eqref{Eq:D}. A prime factor $p$ of $\Delta(f)$ does not divide $\left[\Z_K:\Z[\theta]\right]$ if and only if $p$ satisfies one of the following items: 
\begin{enumerate}[font=\normalfont]
  \item \label{JKS:I1} when $p\mid A$ and $p\mid B$, then $p^2\nmid B$;
  \item \label{JKS:I2} when $p\mid A$ and $p\nmid B$, then
  \[\mbox{either } \quad p\mid A_2 \mbox{ and } p\nmid B_1 \quad \mbox{ or } \quad p\nmid A_2\left((-B)^{M_1}A_2^{N_1}-\left(-B_1\right)^{N_1}\right),\]
  where $A_2=A/p$ and $B_1=\frac{B+(-B)^{p^e}}{p}$ with $p^e\mid\mid N$;
  \item \label{JKS:I3} when $p\nmid A$ and $p\mid B$, then
  \[\mbox{either } \quad p\mid A_1 \mbox{ and } p\nmid B_2 \quad \mbox{ or } \quad p\nmid A_1B_2^{M-1}\left((-A)^{M_1}A_1^{N_1-M_1}-\left(-B_2\right)^{N_1-M_1
  }\right),\]
  where $A_1=\frac{A+(-A)^{p^j}}{p}$ with $p^j\mid\mid (N-M)$, and $B_2=B/p$;
  \item \label{JKS:I4} when $p\nmid AB$ and $p\mid M$ with $N=up^m$, $M=vp^m$, $p\nmid \gcd\left(u,v\right)$, then the polynomials
   \begin{align*}
    G(x):&=x^{N/p^m}+Ax^{M/p^m}+B \quad \mbox{and}\\
    H(x):&=\dfrac{Ax^{M}+B+\left(-Ax^{M/p^m}-B\right)^{p^m}}{p}
   \end{align*}
   are coprime modulo $p$;
         \item \label{JKS:I5} when $p\nmid ABM$, then $p^2\nmid D/r^{N_1}$.
   \end{enumerate}
\end{thm}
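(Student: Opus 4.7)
The plan is to apply Theorem \ref{Thm:JKS} prime-by-prime to $\FF_n(x)=x^{2s^n}-ax^{s^n}-b$, after first establishing irreducibility and computing $\Delta(\FF_n)$. The hypotheses $(*)$ and on $s$ are calibrated so that Dedekind's criterion passes automatically at every prime divisor of $\Delta(\FF_n)$ not dividing $s$, reducing the whole question to an analysis at the primes $p\mid s$, which is where the Wall-Sun-Sun condition enters.

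First, squarefreeness of $\D$ forces $f(x)=x^2-ax-b$ to be irreducible over $\Q$; let $\alpha\in\Q(\sqrt{\D})$ be a root, so $\NN_{\Q(\alpha)/\Q}(\alpha)=-b$. Theorem \ref{Thm:Capelli1} reduces irreducibility of $\FF_n=f(x^{s^n})$ to that of $x^{s^n}-\alpha$ over $\Q(\alpha)$, and Theorem \ref{Thm:Capelli2} further reduces this to excluding $\alpha=\gamma^p$ for primes $p\mid s$ (and $\alpha=-4\gamma^4$ when $4\mid s^n$); squarefreeness of $b$, the norm identity $\NN(\alpha)=-b$, $\gcd(b,s)=1$, and $a\not\equiv 0\pmod 4$ together rule these out. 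Theorem \ref{Thm:Swan} with $N=2s^n$, $M=s^n$, $A=-a$, $B=-b$ yields $D=-s^{2n}(a^2+4b)$ and hence, up to sign, $\Delta(\FF_n)=b^{s^n-1}\,s^{2ns^n}\,(a^2+4b)^{s^n}$. For a prime $q\mid\Delta(\FF_n)$ with $q\nmid s$: if $q\mid b$, squarefreeness of $b$ puts us in case (\ref{JKS:I3}) with $q\,\|\,B$; if $q\mid(a^2+4b)$ is odd with $q\nmid 2abs$, case (\ref{JKS:I5}) applies and squarefreeness of $\D$ gives $q^2\nmid D/r^{N_1}$; the prime $q=2$ is handled via the $(a,b)\bmod 4$ residues prescribed by $(*)$ and items (\ref{JKS:I1})--(\ref{JKS:I3}). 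In every case the criterion passes, so $q\nmid[\Z_K\!:\!\Z[\theta]]$.

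The key step is the analysis at each prime $p\mid s$. Fix such a $p\ge 3$, and set $p^m\,\|\,s^n$, $t=s^n/p^m$. Since $\gcd(p,ab)=1$, item (\ref{JKS:I4}) of Theorem \ref{Thm:JKS} applies with $G(x)=f(x^t)$ and $H(x)=\bigl((ax^t+b)^{p^m}-ax^{s^n}-b\bigr)/p$. Modulo $p$, Frobenius collapses $G$ to the $t$-th substitution of $\overline f$, whose factorization is controlled by $\delta_p\in\{0,-1\}$ (ensured by the hypothesis $\delta_p\ne 1$). Expanding $(ax^t+b)^{p^m}$ binomially and tracking the valuations $v_p\bigl(\tbinom{p^m}{i}\bigr)=m-v_p(i)$, one shows that $\gcd(\overline G,\overline H)\ne 1$ in $\F_p[x]$ if and only if $\alpha^{\pi(p)}\equiv 1\pmod{p^2\Z_K}$, equivalently $U_{\pi(p)}\equiv 0$ and $\alpha^{\pi(p)}+\bar\alpha^{\pi(p)}\equiv 2\pmod{p^2}$, which by Theorem \ref{Thm:Period}(\ref{I4:R}) is exactly $\pi(p^2)=\pi(p)$. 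The extra hypothesis $\delta_3=-1$ when $3\mid s$ rules out a degenerate case at $p=3$ with $3\mid\D$; the case $p=2$ (when $2\mid s$) is treated by the analogous argument using Theorem \ref{Thm:Period}(\ref{I0:p=2}) and $(*)$. It follows that $\FF_n$ is monogenic if and only if no prime divisor of $s$ is $(a,b)$-Wall-Sun-Sun, independent of $n$ (since the primes dividing $s^n$ are the primes dividing $s$).

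The main obstacle is this last equivalence: converting the JKS coprimality of $\overline G$ and $\overline H$ into the Wall-Sun-Sun condition $\pi(p^2)=\pi(p)$. This requires careful bookkeeping of the binomial contributions to $H(x)\bmod p$ and identification of the surviving terms after Frobenius with matrix entries of $\bigl(\begin{smallmatrix}a&b\\1&0\end{smallmatrix}\bigr)^{\pi(p)}\bmod p^2$, with additional care when $\delta_p=0$ (so $\overline f$ has a double root); this degenerate case is the origin of the supplementary hypothesis $\delta_3=-1$ when $3\mid s$.
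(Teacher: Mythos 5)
Your proposal does not address the statement at hand. The statement is Theorem \ref{Thm:JKS} itself: the Jakhar--Khanduja--Sangwan characterization of the primes dividing $\left[\Z_K:\Z[\theta]\right]$ for a general trinomial $x^N+Ax^M+B$. What you have written is instead an outline of a proof of Theorem \ref{Thm:Main} (the monogenicity/Wall-Sun-Sun equivalence for $\FF_n(x)=f(x^{s^n})$), and your very first sentence announces that you will ``apply Theorem \ref{Thm:JKS} prime-by-prime.'' That is, you assume the result you were asked to prove and use it as a black box to derive a different theorem. As a proof of Theorem \ref{Thm:JKS} this is vacuous and circular; none of the five items of the criterion is actually established anywhere in your argument.

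A genuine proof of Theorem \ref{Thm:JKS} would have to start from Dedekind's criterion (Theorem \ref{Thm:Dedekind}) applied to $T(x)=x^N+Ax^M+B$ at an arbitrary prime $p\mid\Delta(f)$, work out the factorization of $\overline{T}(x)$ in $\F_p[x]$ in each of the five divisibility regimes for $p$ relative to $A$, $B$ and $M$, compute the auxiliary polynomial $F(x)=(g(x)h(x)-T(x))/p$, and translate the condition $\gcd(\overline{F},\overline{g},\overline{h})=1$ into the explicit arithmetic conditions on $A_1,A_2,B_1,B_2$, on the pair $G,H$, and on $D/r^{N_1}$ listed in items (\ref{JKS:I1})--(\ref{JKS:I5}). (Equivalently, one can argue via Newton polygons of the relevant $\tau_i$-adic expansions, which is the route taken in \cite{JKS2}.) The paper itself does not reprove this theorem -- it imports it from \cite{JKS2} -- but in any case your text contains none of this; it should be resubmitted either as a proof of Theorem \ref{Thm:JKS} along the lines just described, or acknowledged as pertaining to Theorem \ref{Thm:Main} instead.
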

\begin{rem}
We will find both Theorem \ref{Thm:Dedekind} and Theorem \ref{Thm:JKS} useful in our investigations.
\end{rem}

The next theorem follows from Corollary (2.10) in \cite{Neukirch}.
\begin{thm}\label{Thm:CD}
  Let $K$ and $L$ be number fields with $K\subset L$. Then \[\Delta(K)^{[L:K]} \bigm|\Delta(L).\]
\end{thm}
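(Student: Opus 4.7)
The plan is to derive the divisibility from the tower formula for discriminants, which is precisely what Neukirch's Corollary (2.10) delivers. Writing $\mathfrak{d}_{L/K}\subseteq \O_K$ for the relative discriminant ideal and $\NN_{K/\Q}$ for the ideal norm from $\O_K$ to $\Z$, the identity I would invoke reads
\[\Delta(L)\,\Z \;=\; \NN_{K/\Q}\!\bigl(\mathfrak{d}_{L/K}\bigr)\cdot \Delta(K)^{[L:K]}\,\Z\]
as ideals of $\Z$.

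Granting this multiplicativity in the tower $\Q\subset K\subset L$, the argument collapses to two observations. First, the relative discriminant $\mathfrak{d}_{L/K}$ is a nonzero integral ideal of $\O_K$, so $\NN_{K/\Q}(\mathfrak{d}_{L/K})$ is a nonzero integral ideal of $\Z$, i.e., a principal ideal $n\Z$ for some positive integer $n$. Second, reading the tower formula as an equality of nonnegative integers up to sign, we obtain $|\Delta(L)| = n\cdot |\Delta(K)|^{[L:K]}$, from which the divisibility $\Delta(K)^{[L:K]}\bigm|\Delta(L)$ is immediate.

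The only real obstacle is the tower formula itself, which I would not reprove; its standard derivation proceeds by localizing at each prime of $\O_K$, identifying $\mathfrak{d}_{L/K}$ with $\NN_{L/K}(\mathfrak{D}_{L/K})$ via the relative different $\mathfrak{D}_{L/K}$, and then invoking transitivity of the different for $\Q\subset K\subset L$ together with the absolute relation $\Delta(L)\,\Z = \NN_{L/\Q}(\mathfrak{D}_{L/\Q})$. Since this machinery is packaged as the cited corollary, a complete proof in the paper amounts to little more than quoting it and recording the two observations above.
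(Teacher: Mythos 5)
Your proposal is correct and matches the paper's approach: the paper simply cites Corollary (2.10) of Neukirch, which is exactly the tower formula $\Delta(L)\,\Z=\mathcal{N}_{K/\Q}(\mathfrak{d}_{L/K})\cdot\Delta(K)^{[L:K]}\,\Z$ that you invoke, and the divisibility follows since the norm of the relative discriminant is an integral ideal of $\Z$. The paper records no further detail, so your two observations constitute precisely the intended (omitted) argument.
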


\section{The Proof of Theorem \ref{Thm:Main}}
Throughout this section we let
\[f(x)=x^2-ax-b\in \Z[x] \quad \mbox{and} \quad \alpha=\frac{a+\sqrt{a^2+4b}}{2},\] where $a$ and $b$ satisfy $(*)$. 
We first prove some lemmas.
\begin{lemma}\label{Lem:Irreducible}
Let $s$ be a positive integer. Then $f(x^{s^n})$ is irreducible over $\Q$ for all integers $n\ge 1$.
\end{lemma}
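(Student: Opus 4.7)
The plan is to apply Capelli's theorems (Theorems~\ref{Thm:Capelli1} and~\ref{Thm:Capelli2}) and push the problem onto a simple norm computation. First, $f(x) = x^2 - ax - b$ is itself irreducible over $\Q$: since $\D$ is squarefree with $\D \ge 2$ (using $a,b \ge 1$), $\D$ is not a perfect square, so $\sqrt{a^2+4b}$ is irrational. Setting $K := \Q(\alpha)$ and taking $h(x) = x^{s^n}$ in Theorem~\ref{Thm:Capelli1}, $f(x^{s^n})$ is irreducible over $\Q$ if and only if $x^{s^n} - \alpha$ is irreducible over $K$. By Theorem~\ref{Thm:Capelli2}, reducibility of $x^{s^n} - \alpha$ over $K$ requires either (i) $\alpha = \beta^p$ for some prime $p \mid s$ and some $\beta \in K$, or (ii) $4 \mid s^n$ and $\alpha = -4\beta^4$ for some $\beta \in K$. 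I will refute both by applying the field norm $\NN := \NN_{K/\Q}$, for which a direct computation gives $\NN(\alpha) = \alpha\bar\alpha = -b$.

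For (ii), taking norms yields $-b = 16\,\NN(\beta)^4$; since $\NN(\beta) \in \Q$, the right side is non-negative while $-b < 0$, a contradiction. For (i), taking norms gives $\NN(\beta)^p = -b$. When $p = 2$, the same non-negativity argument forces $-b \ge 0$, impossible. When $p$ is odd, a rational $p$-th root of $-b$ exists only if $b$ is a positive $p$-th power in $\Q$, which by squarefreeness of $b$ can occur only when $b = 1$.

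The residual case $b = 1$ is where the argument becomes substantive, and I would close it using Proposition~\ref{Prop:Yokoi}. Since $\alpha \in \Z_K$ and $\beta$ satisfies the monic equation $x^p - \alpha = 0$ with coefficients in $\Z_K$, the integral closedness of $\Z_K$ in $K$ forces $\beta \in \Z_K$; combined with $\NN(\beta) = -1$ (from $\NN(\beta)^p = -1$ with $p$ odd), this makes $\beta$ a unit. Proposition~\ref{Prop:Yokoi} identifies $\alpha$ as the fundamental unit of $K$, so $\beta = \pm \alpha^k$ for some $k \in \Z$. Then $\alpha = \beta^p = \pm\alpha^{kp}$ (using that $p$ is odd), i.e.\ $\alpha^{1-kp} = \pm 1$; since $\alpha > 1$, we must have $1 - kp = 0$, which has no integer solution for any prime $p \ge 3$. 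This rules out both (i) and (ii), so $x^{s^n} - \alpha$ is irreducible over $K$, and hence $\FF_n(x)$ is irreducible over $\Q$.

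The main obstacle I foresee is exactly the case $b = 1$: the norm obstruction vanishes there because $-1$ is automatically a $p$-th power in $\Q$ for every odd $p$, and one needs the extra structural input that $\alpha$ generates the unit group modulo torsion (via Yokoi) to extract a contradiction. Everything else reduces to signs and squarefreeness.
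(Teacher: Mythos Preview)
Your proof is correct and follows essentially the same route as the paper's: Capelli's theorems reduce irreducibility of $\FF_n$ to ruling out $\alpha=\beta^p$ or $\alpha=-4\beta^4$ in $K$, both are eliminated via the norm $\NN(\alpha)=-b$, and the residual case $b=1$ is closed using Proposition~\ref{Prop:Yokoi}. The only cosmetic differences are that you use a sign/positivity obstruction ($16\NN(\beta)^4\ge 0>-b$) where the paper invokes squarefreeness of $-b$, and you make the step $\beta\in\Z_K$ explicit before calling it a unit, which the paper leaves implicit.
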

\begin{proof}
Since $\D>1$ is squarefree, it follows that $f(x)$ is irreducible over $\Q$, and the trivial case of $s=1$ is true. 
So, suppose then that $s\ge 2$. Note that $f(\alpha)=0$. Let $h(x)=x^{s^n}$ and assume, by way of contradiction, that $f(h(x))$ is reducible. Then, by Theorems \ref{Thm:Capelli1} and \ref{Thm:Capelli2}, we have, for some $\beta\in \Q(\alpha)$, that either $\alpha=\beta^p$ for some prime $p$ dividing $s$, or $\alpha=-4\beta^4$ if $s^n\equiv 0 \pmod{4}$.

If $b\ge 2$, then, in either case, we arrive at a contradiction by taking norms, since $\NN(\alpha)=-b$ is squarefree but neither $\NN(\beta^p)=\NN(\beta)^p$ nor $\NN(-4\beta^4)=16\NN(\beta)^4$ is squarefree. Suppose then that $b=1$. If $\alpha=-4\beta^4$, then
\[-1=\NN(\alpha)=\NN(-4\beta^4)\equiv 0 \pmod{16},\] which is impossible.
Hence, $\alpha=\beta^p$ for some prime divisor $p$ of $s$. 
Then, we see by taking norms that
  \[\NN(\beta)^p=\NN(\alpha)=-1,\] which implies that $p\ge 3$ and $\NN(\beta)=-1$, since $\NN(\beta)\in \Z$. Thus, $\beta$ is a unit, and therefore $\beta=\pm \alpha^j$ for some $j\in \Z$, since $\alpha$ is the fundamental unit of $\Q(\sqrt{\D})$ by Proposition \ref{Prop:Yokoi}. 
   Consequently,
  \[\alpha=\beta^p=(\pm 1)^p\alpha^{jp},\] which implies that $(\pm 1)^p\alpha^{jp-1}=1$, contradicting the fact that $\alpha$ has infinite order in the group of units of the ring of algebraic integers in the real quadratic field $\Q(\sqrt{\D})$.
\end{proof}
\begin{rem}
  Although here we are assuming that conditions $(*)$ hold, so that $a\not \equiv 0 \pmod{4}$, the argument given in the proof of Lemma \ref{Lem:Irreducible} for the case of $b=1$ is still valid when $a\equiv 0 \pmod{4}$ with the single exception of $a=4$ \cite{Y} since, in that case, $\alpha=2+\sqrt{5}$ is not the fundamental unit of $\Q(\sqrt{5})$. However, since $\varepsilon=(1+\sqrt{5})/2$ is the fundamental unit of $\Q(\sqrt{5})$, and $\alpha=\varepsilon^3$, Theorem \ref{Thm:Capelli1} and Theorem \ref{Thm:Capelli2} can be used to determine exactly when $f(x^{s^n})=x^{2s^n}-4x^{s^n}-1$ is reducible and how $f(x^{s^n})$ factors.
\end{rem}
\begin{lemma}\label{Lem:Basic1}
The polynomial $f(x)$ is monogenic.
  \end{lemma}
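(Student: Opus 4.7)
The plan is to analyze $\Delta(f) = a^2 + 4b$ and apply Dedekind's Criterion (Theorem \ref{Thm:Dedekind}) at each prime whose square divides $\Delta(f)$. The polynomial $f$ is irreducible over $\Q$ since $\D > 1$ is squarefree, so by \eqref{Eq:Dis-Dis} monogenicity is equivalent to checking that $p \nmid [\Z_K : \Z[\theta]]$ for every prime $p$ with $p^2 \mid \Delta(f)$.

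First, suppose $a$ is odd. Then $\Delta(f) = a^2 + 4b = \D$ is squarefree by $(*)$, so $f$ is monogenic immediately. Next, suppose $a$ is even. Since $a \not\equiv 0 \pmod 4$, I would write $a = 2a'$ with $a'$ odd, so that
\[\Delta(f) = 4(a'^2 + b) = 4\D.\]
Because $\D$ is squarefree, the only prime whose square can divide $\Delta(f)$ is $p = 2$, and the task reduces to verifying Dedekind's condition at $p = 2$.

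At $p = 2$ I would split on the parity of $b$. If $b$ is even, then $b$ squarefree forces $b/2$ odd; here $\bar f(x) = x^2$ in $\F_2[x]$, so I would take $g(x) = h(x) = x$, compute
\[F(x) = \frac{x^2 - f(x)}{2} = a'x + \tfrac{b}{2},\]
and observe that $\bar F(x) = x + 1$ (since $a'$ and $b/2$ are odd) is coprime to $\bar g = x$. If $b$ is odd, then $\bar f(x) = (x+1)^2$, so I would take $g(x) = h(x) = x+1$, compute
\[F(x) = \frac{(x+1)^2 - f(x)}{2} = (1+a')x + \tfrac{1+b}{2},\]
and use the squarefreeness of $\D = a'^2 + b$ (which is even, hence $\D \equiv 2 \pmod 4$) together with $a'^2 \equiv 1 \pmod 8$ to conclude $b \equiv 1 \pmod 4$; thus $(1+b)/2$ is odd while $1+a'$ is even, giving $\bar F = 1$ and hence $\gcd(\bar F, \bar g, \bar h) = 1$.

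The main subtlety is this last subcase: one must leverage the full force of the squarefreeness of $\D$ to pin down $b \pmod 4$, since otherwise $\bar F$ would vanish and Dedekind's criterion would fail. Everything else is essentially bookkeeping about when powers of $2$ can appear in $\Delta(f)$.
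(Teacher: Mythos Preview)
Your proof is correct and follows essentially the same approach as the paper: both reduce monogenicity to a Dedekind-type check at the primes dividing $\Delta(f)=a^2+4b$, and both hinge on the same key observation that when $a$ is even and $b$ is odd, the squarefreeness of $\D=(a/2)^2+b$ forces $b\equiv 1\pmod 4$. The only cosmetic differences are that the paper invokes the packaged trinomial criterion (Theorem~\ref{Thm:JKS}) rather than Theorem~\ref{Thm:Dedekind} directly, and organizes cases by whether $p\mid a$ rather than by the parity of $a$; your initial split on the parity of $a$ is arguably cleaner, since it disposes of the odd-$a$ case instantly via $\Delta(f)=\D$ being squarefree.
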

\begin{proof}
By Lemma \ref{Lem:Irreducible}, $f(x)$ is irreducible over $\Q$. Let $p$ be a prime divisor of $\Delta(f)=a^2+4b$. To examine the monogenicity of $f(x)$, we use Theorem \ref{Thm:JKS} with $\theta=\alpha$.

Suppose first that $p\mid a$. Then $p\mid 4b$. If $p\mid b$, then item \ref{JKS:I1} of Theorem \ref{Thm:JKS} applies, and we see that
$\left[\Z_K:\Z[\alpha]\right]\not \equiv 0 \pmod{p}$ since $b$ is squarefree. Now suppose that $p\nmid b$, so that item \ref{JKS:I2} of Theorem \ref{Thm:JKS} applies. Note that $p=2$ since $p\mid 4b$. Hence, $2\mid a$ and $\D=(a/2)^2+b\equiv 1+b \pmod{4}$ since $a\not \equiv 0 \pmod{4}$. Thus, since $\D$ is squarefree and $2\nmid b$, it follows that $b\equiv 1 \pmod{4}$ and therefore,
\[B_1=(-b+b^2)/2=b(b-1)/2\equiv 0 \pmod{2}.\] Also, $A_2=-a/2\equiv 1 \pmod{2}$, since $a\not \equiv 0 \pmod{4}$. Thus,
\[bA_2^2-(-B_1)^2\equiv 1 \pmod{2},\]
from which we conclude that $[\left[\Z_K:\Z[\alpha]\right]\not \equiv 0 \pmod{2}$.

Next, suppose that $p\nmid a$.  Then $p\nmid 4b$ since $p\mid (a^2+4b)$, and so item \ref{JKS:I5} of Theorem \ref{Thm:JKS} applies. Since $\D$ is squarefree and $p\ne 2$, we deduce that $p^2\nmid (a^2+4b)$ and consequently, $\left[\Z_K:\Z[\alpha]\right]\not \equiv 0 \pmod{p}$, which completes the proof.
 \end{proof}

\begin{lemma}\label{Lem:3}
Let $p$ be a prime, with $b\not \equiv 0 \pmod{p}$. 
\begin{enumerate}
\item \label{I1: p=2} The prime $p=2$ is an $(a,b)$-Wall-Sun-Sun prime if and only if $(a,b)_4=(3,3)$.
\item \label{I2: p>2} If $p\ge 3$ and $a\equiv 0 \pmod{p}$, then $p$ is an $(a,b)$-Wall-Sun-Sun prime if and only if $\ord_{p^2}(b)=\ord_{p}(b)$ and $a \equiv 0\pmod{p^2}$.
\item \label{I4:p>3 divides a^2+4b} If $p\ge 5$ and $\delta_p=0$, then p is not an $(a,b)$-Wall-Sun-Sun prime.
\end{enumerate}
\end{lemma}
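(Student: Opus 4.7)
The plan is to treat the three parts separately, using direct analysis of the recursion for $[U_n]$ modulo $p$ and $p^2$, with a $p$-adic expansion for Part (c). Part (a) is read off from Theorem \ref{Thm:Period}(b): since $\gcd(b,2)=1$ and conditions $(*)$ require $a\not\equiv 0\pmod 4$, the residue class $(a,b)_4$ falls into exactly the six cases tabulated there, and comparing $\pi(2)$ with $\pi(4)$ in each case shows that $\pi(4)=\pi(2)$ holds precisely for $(a,b)_4=(3,3)$.

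For Part (b), since $p\mid a$, the recursion modulo $p$ collapses to $U_n\equiv bU_{n-2}\pmod p$, and a straightforward induction yields $U_{2k}\equiv 0$ and $U_{2k+1}\equiv b^k\pmod p$; hence $\pi(p)=2d$ with $d=\ord_p(b)$. The analysis of $\pi(p^2)$ splits by $v_p(a)$. If $v_p(a)\ge 2$, the identical reasoning modulo $p^2$ gives $\pi(p^2)=2\ord_{p^2}(b)$, so $\pi(p^2)=\pi(p)$ iff $\ord_{p^2}(b)=\ord_p(b)$. If $v_p(a)=1$, write $a=pc$ with $p\nmid c$; an induction gives
\[U_{2k}\equiv kpcb^{k-1}\pmod{p^2}\quad\text{and}\quad U_{2k+1}\equiv b^k\pmod{p^2}.\]
Then $\pi(p^2)=\pi(p)=2d$ would require $U_{2d}\equiv 0\pmod{p^2}$, i.e. $p\mid d$; but $d$ divides $p-1$, ruling this out. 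Combining the two cases gives the stated characterization.

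For Part (c), with $p\ge 5$ and $\delta_p=0$, we have $p\mid a^2+4b$; since $\D$ is squarefree and $p$ is odd, $p$ exactly divides $a^2+4b$. Modulo $p$, the trinomial $x^2-ax-b$ has a double root $r\equiv a/2\pmod p$ with $r^2\equiv -b\pmod p$, and a standard induction gives $U_n\equiv nr^{n-1}\pmod p$. Hence the rank of apparition of $p$ in $[U_n]$ is $p$, and one checks $\pi(p)=p\rho$ with $\rho=\ord_p(r)\mid p-1$. By Theorem \ref{Thm:Period}(c), to rule out WSS it suffices to show $U_{\pi(p)}\not\equiv 0\pmod{p^2}$. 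Working in the ramified quadratic extension $\Z_p[\sqrt{a^2+4b}]$, write $\alpha,\beta=r\pm\sqrt{(a^2+4b)/4}$ and expand binomially to obtain
\[U_n=\sum_{j\ge 0}\binom{n}{2j+1}r^{n-2j-1}\Bigl(\frac{a^2+4b}{4}\Bigr)^j.\]
For $n=p$, the facts that $v_p((a^2+4b)/4)=1$ and $v_p\bigl(\binom{p}{k}\bigr)=1$ for $1\le k\le p-1$ show (using $p\ge 5$) that only the $j=0$ term contributes modulo $p^2$, giving $U_p\equiv pr^{p-1}\equiv p\pmod{p^2}$ by Fermat. Combining with the identity $U_{pn}=U_p\widetilde U_n$, where $\widetilde U_n$ is the Lucas sequence associated to $\alpha^p,\beta^p$ with parameters $\widetilde a=\alpha^p+\beta^p\equiv a\pmod p$ and $\widetilde b=b^p\equiv b\pmod p$, we obtain $\widetilde U_\rho\equiv U_\rho\equiv \rho r^{-1}\pmod p$, a unit modulo $p$; hence $U_{\pi(p)}=U_p\widetilde U_\rho\equiv p\cdot(\rho r^{-1})\not\equiv 0\pmod{p^2}$.

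The main obstacle is Part (c): the double-root setting forces the rank of apparition to equal $p$, so the question of whether $U_{\pi(p)}\equiv 0\pmod{p^2}$ cannot be resolved modulo $p$ alone and requires genuine $p$-adic work, involving both the binomial computation of $U_p\pmod{p^2}$ and the telescoping identity $U_{pn}=U_p\widetilde U_n$. Parts (a) and (b) are essentially mechanical once Theorem \ref{Thm:Period} and the recursion are in hand.
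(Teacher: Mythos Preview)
Your arguments for parts (a) and (b) match the paper's essentially verbatim: (a) is read off from the $\pi(2)$/$\pi(4)$ table in Theorem~\ref{Thm:Period}\ref{I0:p=2}, and (b) follows from the explicit pattern $U_{2k}\equiv kpcb^{k-1}$, $U_{2k+1}\equiv b^k\pmod{p^2}$ when $a=pc$, which is precisely what the paper writes out.

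For part (c) the paper gives no argument of its own, deferring entirely to \cite{HJBAMS}. Your self-contained proof via the Binet expansion $U_n=\sum_{j}\binom{n}{2j+1}r^{n-2j-1}D^j$ together with the telescoping identity $U_{p\rho}=U_p\widetilde U_\rho$ is correct and nicely exhibits where the hypothesis $p\ge 5$ enters: the top term $j=(p-1)/2$ contributes $D^{(p-1)/2}$, and one needs $(p-1)/2\ge 2$ to discard it modulo $p^2$. The observation that $\widetilde a=V_p\equiv a$ and $\widetilde b=b^p\equiv b\pmod p$ forces $\widetilde U_\rho\equiv U_\rho\equiv\rho r^{-1}\not\equiv 0\pmod p$, and hence $v_p(U_{\pi(p)})=v_p(U_p)+v_p(\widetilde U_\rho)=1$, is clean. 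What you gain over the paper is a self-contained treatment with no external citation; what you give up is brevity.
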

\begin{proof}
  We see that item \ref{I1: p=2} follows from item \ref{I0:p=2} of Theorem \ref{Thm:Period}.

  To establish item \ref{I2: p>2}, we let $[U_n]_m$ denote the sequence \eqref{Eq:Lucas} reduced modulo the integer $m\in \{p,p^2\}$. Since $a\equiv 0\pmod{p}$, we can write $a=pk$, for some positive integer $k$.
  Then,
  \[[U_n]_p=[0,1,0,b,0,b^2,0,b^3,0,b^4,0,b^5,\ldots]\] and
  \[[U_n]_{p^2}=[0,1,pk,b,2pkb,b^2,3pkb^2,b^3,\ldots,\ord_p(b)pkb^{\ord_p(b)-1},b^{\ord_p(b)},\ldots].\]

   Thus, it follows that $p$ is an $(a,b)$-Wall-Sun-Sun prime if and only if
  \begin{align*}
  \pi(p^2)=\pi(p)=2\ord_p(b)&\Longleftrightarrow \ord_{p^2}(b)=\ord_p(b) \quad \mbox{and}\\
  &\qquad \qquad \ord_p(b)pkb^{\ord_p(b)-1}\equiv 0 \pmod{p^2}\\
  &\Longleftrightarrow \ord_{p^2}(b)=\ord_p(b)\quad \mbox{and}\quad a\equiv 0 \pmod{p^2},\\
  \end{align*}
  since $b\not \equiv 0 \pmod{p}$ and $\ord_p(b)\le p-1\not \equiv 0\pmod{p}$.

  The proof of 
   item \ref{I4:p>3 divides a^2+4b} can be found in \cite{HJBAMS}.
 \end{proof}

\begin{lemma}\label{Lem:Order}
Let $\overline{\alpha}=(a-\sqrt{a^2+4b})/2$, and let $p\ge 3$ be a prime such that $\delta_p=-1$. 
     Then
     \begin{enumerate}
     \item \label{Per I:1} $\ord_m(\alpha)=\ord_m(\overline{\alpha})=\pi(m)$ for $m\in \{p,p^2\}$ and 
     \item \label{Per I:2} $\alpha^{p+1}\equiv -b\pmod{p}$.
     \end{enumerate}
\end{lemma}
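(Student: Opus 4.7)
The plan is to work in the quotient ring $R_m := \Z[\alpha]/m\Z[\alpha]$ for $m \in \{p, p^2\}$ and connect powers of $\alpha$ to the Lucas sequence via a simple closed form. First, I would observe that $\delta_p=-1$ forces $p\nmid b$: if $p\mid b$, then $\D \equiv a^2 \pmod p$ would be a square (or zero) mod $p$, contradicting $\delta_p=-1$. Hence $\gcd(b,m)=1$, and all the arithmetic about to be done is legitimate.

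For item \ref{Per I:2}, since $\delta_p=-1$ and $p\ge 3$, the trinomial $f(x)=x^2-ax-b$ is irreducible modulo $p$, so
\[
R_p = \Z[\alpha]/p\Z[\alpha] \cong \F_p[x]/(f(x)) \cong \F_{p^2}.
\]
The nontrivial element of $\mathrm{Gal}(\F_{p^2}/\F_p)$ is the Frobenius $z\mapsto z^p$, which must permute the two roots of $\overline{f}(x)$ in $\F_{p^2}$. Therefore $\alpha^p \equiv \overline{\alpha} \pmod p$, and multiplying by $\alpha$ gives $\alpha^{p+1}\equiv \alpha\overline{\alpha} = -b \pmod p$, establishing (b).

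For item \ref{Per I:1}, the key identity is
\[
\alpha^n = U_n\alpha + bU_{n-1} \qquad (n\ge 1),
\]
proved by a straightforward induction using $\alpha^2=a\alpha+b$ and the Lucas recurrence $U_{n+1}=aU_n+bU_{n-1}$; the same identity holds verbatim for $\overline{\alpha}$ since it satisfies the same quadratic. Because $\{1,\alpha\}$ is a $\Z$-basis of $\Z[\alpha]$, we have $\alpha^n\equiv 1\pmod m$ in $\Z[\alpha]$ if and only if
\[
U_n\equiv 0\pmod m \quad\text{and}\quad bU_{n-1}\equiv 1\pmod m.
\]
When $U_n\equiv 0 \pmod m$, the recurrence gives $U_{n+1}\equiv bU_{n-1}\pmod m$, so the second congruence is equivalent to $U_{n+1}\equiv 1\pmod m$. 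These are exactly the conditions characterizing $\pi(m)\mid n$, and the same chain of equivalences applies to $\overline{\alpha}$. Thus $\ord_m(\alpha)=\ord_m(\overline{\alpha})=\pi(m)$ for both $m=p$ and $m=p^2$.

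There is no real obstacle here; the only point requiring a bit of care is the initial observation that $\delta_p=-1$ rules out $p\mid b$, which justifies inverting $b$ modulo $m$ and makes the Frobenius computation in part (b) unambiguous.
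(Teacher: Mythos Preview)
Your proof is correct, and in fact cleaner than the paper's. For item~\ref{Per I:2}, the paper computes $\alpha^{p+1}$ via Euler's criterion and an explicit binomial expansion of $\bigl((a+\sqrt{a^2+4b})/2\bigr)^p$; your Frobenius argument in $\F_{p^2}$ reaches the same conclusion in one line. For item~\ref{Per I:1}, the paper invokes a result of Robinson on the order of the companion matrix $\CC$ modulo $m$, diagonalises $\CC$ over the splitting field, and then uses a conjugation argument (writing $\alpha^z=c+d\sqrt{\D}$ and $\overline{\alpha}^z=c-d\sqrt{\D}$) together with the hypothesis $\delta_p=-1$ to deduce that $\ord_m(\alpha)=\ord_m(\overline{\alpha})$. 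Your route via the closed form $\alpha^n=U_n\alpha+bU_{n-1}$ is more elementary, self-contained, and---as you implicitly observe---does not actually require $\delta_p=-1$ for part~\ref{Per I:1}: it works for any $m$ with $\gcd(b,m)=1$. The paper's approach has the virtue of tying the result to the companion-matrix viewpoint used elsewhere in the literature, but yours is shorter and more direct for the purpose at hand.
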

\begin{proof}
Note that $b\not \equiv 0 \pmod{p}$ since $\delta_p=-1$.
  It follows from \cite{Robinson} that the order, modulo an 
   integer $m\ge 3$ with $\gcd(m,b)=1$, of the companion matrix $\CC$ for the characteristic polynomial of $[U_n]$ is $\pi(m)$. The characteristic polynomial of $[U_n]$ is $f(x)$, so that
  \[\CC=\left[\begin{array}{cc}
    0&b\\
    1&a
  \end{array}\right].\] Since the eigenvalues of $\CC$ are $\alpha$ and $\overline{\alpha}$, we conclude that
  \[\ord_m\left(\left[\begin{array}{cc}
    \alpha&0\\
    0&\overline{\alpha}
  \end{array}\right]\right)=\ord_m(\CC)=\pi(m), \quad \mbox{for $m\in \{p,p^2\}$.}
  \] Let $z\ge 1$ be an integer, and suppose that $\alpha^z=c+d\sqrt{\D}\in \Q(\sqrt{\D})$. Then
    $\NN(\alpha^z)=c^2-\D d^2$. But $\NN(\alpha^z)=\NN(\alpha)^z=(-b)^z$, so that $c^2-\D d^2=(-b)^z$. Thus,
  \[\overline{\alpha}^z=\left(-b/\alpha\right)^z=(-b)^z/(c+d\sqrt{\D})=(-b)^z(c-d\sqrt{\D})/(c^2-\D d^2)=c-d\sqrt{\D}.\]
  Hence, since $\delta_p=-1$, it follows that
  \[\alpha^z\equiv 1 \pmod{m} \quad \mbox{if and only if} \quad \overline{\alpha}^z\equiv 1 \pmod{m}\]
  for $m\in \{p,p^2\}$,
  which establishes item \ref{Per I:1}.

  By Euler's criterion,
   \[\left(\sqrt{a^2+4b}\right)^{p+1}=(a^2+4b)^{(p-1)/2}(a^2+4b)\equiv \delta_p(a^2+4b)\equiv -(a^2+4b) \pmod{p},\] which implies 
   \[\left(\sqrt{a^2+4b}\right)^{p}\equiv -\sqrt{a^2+4b} \pmod{p}.\]
   Hence,
   \begin{align*}\label{Eq:Expansion}
     \alpha^{p+1}&=\left(\frac{a+\sqrt{a^2+4b}}{2}\right) \left(\frac{a+\sqrt{a^2+4b}}{2}\right)^{p}\\
     &=\left(\frac{a+\sqrt{a^2+4b}}{2}\right) \sum_{j=0}^p\binom{p}{j}\left(\frac{a}{2}\right)^j\left(\frac{\sqrt{a^2+4b}}{2}\right)^{p-j}\\
     &\equiv \left(\frac{a+\sqrt{a^2+4b}}{2}\right)\left(\left(\frac{a}{2}\right)^p+\left(\frac{\sqrt{a^2+4b}}{2}\right)^{p}\right) \pmod{p}\\
     &\equiv \left(\frac{a+\sqrt{a^2+4b}}{2}\right)\left(\frac{a-\sqrt{a^2+4b}}{2}\right) \pmod{p}\\
     &\equiv -b \pmod{p},
    \end{align*}
       which completes the proof of the lemma.
 \end{proof}

\begin{lemma}\label{Lem:Main1}
  Let $p\ge 3$ be a prime such that $\delta_p=-1$. Then the following conditions are equivalent:
  \begin{enumerate}
    \item \label{Lem:Main1 I1} $p$ is an $(a,b)$-Wall-Sun-Sun prime,
    \item \label{Lem:Main1 I2} $f(\alpha^{p^m}) \equiv 0\pmod{p^2}$ for all integers $m\ge 1$,
    \item \label{Lem:Main1 I3} $f(\alpha^{p^m}) \equiv 0\pmod{p^2}$ for some integer $m\ge 1$.
  \end{enumerate}
  \end{lemma}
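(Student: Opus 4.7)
The plan is to reduce both the ``all $m$'' and ``some $m$'' statements to the single congruence $\alpha^{p^2}\equiv\alpha\pmod{p^2}$ in $\Z_K$, and then to identify this congruence with the $(a,b)$-Wall--Sun--Sun property via Lemma~\ref{Lem:Order} and Theorem~\ref{Thm:Period}.

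I would begin with a key reduction. Since $\delta_p=-1$ the prime $p$ is inert in $\Z_K$, and since $p\nmid a^2+4b$ the element $\alpha-\overline{\alpha}=\sqrt{a^2+4b}$ is a unit modulo $p$. Writing $f(y)=(y-\alpha)(y-\overline{\alpha})$ and comparing $p$-adic valuations in $\Z_K$, the two factors cannot both lie in $p\Z_K$ (otherwise so would their difference $\overline{\alpha}-\alpha$), so $f(y)\equiv 0\pmod{p^2}$ in $\Z_K$ forces $y\equiv\alpha$ or $y\equiv\overline{\alpha}\pmod{p^2}$. Moreover, Lemma~\ref{Lem:Order} gives $\alpha^{p+1}\equiv -b=\alpha\overline{\alpha}\pmod{p}$, whence $\alpha^p\equiv\overline{\alpha}\pmod{p}$, so $\alpha^{p^m}\equiv\alpha\pmod{p}$ for even $m$ and $\alpha^{p^m}\equiv\overline{\alpha}\pmod{p}$ for odd $m$. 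For each parity of $m$, then, exactly one of $y\in\{\alpha,\overline{\alpha}\}$ is compatible modulo $p$, so it must actually hold modulo $p^2$.

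Next, a binomial-theorem computation pins down $\alpha^{p^m}$ modulo $p^2$. Writing $\alpha^p=\overline{\alpha}+p\eta$ with $\eta\in\Z_K$, the expansion $(\overline{\alpha}+p\eta)^p\equiv\overline{\alpha}^p\pmod{p^2}$ (every cross-term carries an extra factor of $p$) yields the unconditional identity $\alpha^{p^2}\equiv\overline{\alpha}^p\pmod{p^2}$; iterating the same argument shows $\alpha^{p^m}\equiv\alpha^p\pmod{p^2}$ for all odd $m\geq 1$ and $\alpha^{p^m}\equiv\alpha^{p^2}\pmod{p^2}$ for all even $m\geq 2$. Therefore the desired congruence $f(\alpha^{p^m})\equiv 0\pmod{p^2}$ is equivalent, for even $m$, to $\alpha^{p^2}\equiv\alpha\pmod{p^2}$, and for odd $m$, to $\alpha^p\equiv\overline{\alpha}\pmod{p^2}$. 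These two conditions are themselves equivalent: the Galois automorphism $\sigma\colon\alpha\leftrightarrow\overline{\alpha}$ fixes the ideal $p^2\Z_K$, and applying $\sigma$ together with raising to the $p$-th power interchanges them.

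To finish, I would link the congruence $\alpha^{p^2}\equiv\alpha\pmod{p^2}$ to the Wall--Sun--Sun property. If it holds then $\alpha^{p^2-1}\equiv 1\pmod{p^2}$, so $\pi(p^2)=\ord_{p^2}(\alpha)$ (Lemma~\ref{Lem:Order}) divides $p^2-1$; by Theorem~\ref{Thm:Period}, $\pi(p^2)\in\{\pi(p),p\pi(p)\}$, and since $p\nmid p^2-1$, we conclude $\pi(p^2)=\pi(p)$, i.e., $p$ is $(a,b)$-Wall--Sun--Sun. Conversely, if $p$ is Wall--Sun--Sun then $\alpha^{\pi(p)}\equiv 1\pmod{p^2}$, and since $\pi(p)=\ord_p(\alpha)$ divides $|\F_{p^2}^{\times}|=p^2-1$, this gives $\alpha^{p^2-1}\equiv 1\pmod{p^2}$ as required. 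Assembling, the Wall--Sun--Sun property is equivalent to $\alpha^{p^2}\equiv\alpha\pmod{p^2}$, which is equivalent to $f(\alpha^{p^m})\equiv 0\pmod{p^2}$ for all $m\geq 1$, trivially implies the ``for some $m$'' statement, and the ``for some $m$'' statement implies the Wall--Sun--Sun property by the reduction above applied at that single $m$. The principal technical obstacle is the careful Hensel-style lifting of the Frobenius congruence $\alpha^p\equiv\overline{\alpha}\pmod{p}$ to modulus $p^2$; the enabling fact is that raising to the $p$-th power preserves mod-$p^2$ congruences, which is precisely what collapses $\alpha^{p^m}\pmod{p^2}$ to depend only on the parity of $m$.
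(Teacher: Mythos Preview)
Your proof is correct and takes a genuinely different route from the paper's.

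The paper proves (a)$\Rightarrow$(b) by first using Theorem~\ref{Thm:Period}\ref{I2:QNR} together with $\lambda\mid(p-1)/2$ to get $\alpha^{p^2-1}\equiv 1\pmod{p^2}$, which immediately handles the even-$m$ case; for odd $m$ it then goes through a rather intricate computation with the Binet formula for $U_{\pi}$ to deduce $\alpha^{p+1}\equiv -b\pmod{p^2}$. For (c)$\Rightarrow$(a), the paper again invokes Theorem~\ref{Thm:Period}\ref{I4:R} and \ref{I2:QNR} to obtain $\alpha^{p^3}\equiv\alpha^p\pmod{p^2}$, reducing to the two parities, and then argues each case by hand.

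Your argument replaces all of this with two clean structural observations: (i) since $p$ is inert and $\alpha-\overline{\alpha}$ is a $p$-unit, the zeros of $f$ modulo $p^2$ in $\Z_K$ are exactly $\alpha$ and $\overline{\alpha}$; and (ii) the elementary Frobenius-lifting identity $(\overline{\alpha}+p\eta)^p\equiv\overline{\alpha}^p\pmod{p^2}$ gives $\alpha^{p^2}\equiv\overline{\alpha}^p\pmod{p^2}$ \emph{unconditionally}, so $\alpha^{p^m}\pmod{p^2}$ depends only on the parity of $m$ without any appeal to Theorem~\ref{Thm:Period}\ref{I2:QNR}. The Galois involution then makes the odd- and even-$m$ conditions equivalent in one line. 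What you gain is a shorter, more conceptual proof that avoids the Binet-formula computation entirely and uses only item~\ref{I4:R} of Theorem~\ref{Thm:Period} (not item~\ref{I2:QNR}); what the paper's approach offers is a more explicit, self-contained calculation that stays closer to the recurrence and makes the role of $U_{\pi}$ visible.
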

\begin{proof}
First, observe that \ref{Lem:Main1 I2} clearly implies \ref{Lem:Main1 I3}. 

  We show next that \ref{Lem:Main1 I1} implies \ref{Lem:Main1 I2}. Because $p$ is an $(a,b)$-Wall-Sun-Sun prime, we define
  \[\pi:=\pi(p^2)=\pi(p).\]
       Since $\delta_p=-1$, we see from item \ref{I2:QNR} of Theorem \ref{Thm:Period} that
       \[2(p+1)\lambda\equiv 0 \pmod{\pi}.\] 
        The squares modulo $p$ form a subgroup, of order $(p-1)/2$,  of the multiplicative group $(\Z/p\Z)^{*}$. Thus, $(p-1)/2\equiv 0 \pmod{\lambda}$, so that
       \[2(p+1)(p-1)/2=p^2-1\equiv 0 \pmod{\pi}.\] Consequently, $\alpha^{p^2-1}\equiv 1 \pmod{p^2}$ by item \ref{Per I:1} of Lemma \ref{Lem:Order},
             from which it follows that
       \[\alpha^{p^{2k}}\equiv \alpha \pmod{p^2}\quad \mbox{and}\quad \alpha^{p^{2k+1}}\equiv \alpha^p \pmod{p^2},\]
       for every integer $k\ge 1$.
              Hence,
       \begin{equation*}\label{Eq:f of powers}
       f(\alpha^{p^m})\equiv \left\{\begin{array}{cl}
   \alpha^2-a\alpha-b \pmod{p^2}& \mbox{if $m\equiv 0 \pmod{2}$,}\\
   \alpha^{2p}-a\alpha^{p}-b \pmod{p^2}& \mbox{if $m\equiv 1 \pmod{2}$.}
   \end{array}\right.
   \end{equation*}
        Thus, $f(\alpha^{p^m}) \equiv 0\pmod{p^2}$ when $m\equiv 0 \pmod{2}$, since $\alpha^2-a\alpha-b=0$.
   Suppose then that $m\equiv 1 \pmod{2}$. Let $\overline{\alpha}=(a-\sqrt{a^2+4b})/2$. Since $p$ is an $(a,b)$-Wall-Sun-Sun prime, and the fact that $\overline{\alpha}=-b/\alpha$, we deduce from the Binet-representation formula for $U_{\pi}$ that
       \begin{equation}\label{Eq:Upi}
       U_{\pi}=\frac{\alpha^{\pi}-\overline{\alpha}^{\pi}}{\alpha-\overline{\alpha}}
    =\frac{\alpha^{2\pi}-(-b)^{\pi}}{\alpha^{\pi}\left(\alpha-\overline{\alpha}\right)}\equiv 0 \pmod{p^2}.
     \end{equation} Hence, since $\alpha^{\pi}\equiv 1 \pmod{p^2}$ from item \ref{Per I:1} of Lemma \ref{Lem:Order}, we conclude from \eqref{Eq:Upi} that
    $(-b)^{\pi}\equiv 1 \pmod{p^2}$, which implies that
    \begin{equation}\label{Eq:b2(p+1)lam}
    b^{2(p+1)\lambda}\equiv 1\pmod{p^2},
    \end{equation}
    by item \ref{I2:QNR} of Theorem \ref{Thm:Period}. Thus, from \eqref{Eq:b2(p+1)lam}, it follows that
    \begin{equation}\label{Eq:b2(p+1)lam 2} b^{2(p+1)\lambda}-1\equiv (b^{2\lambda}-1)B\equiv 0 \pmod{p^2},
    \end{equation}
    where \[B=(b^{2\lambda})^p+(b^{2\lambda})^{p-1}+\cdots +b^{2\lambda}+1.\] Since $b^{2\lambda}\equiv (b^2)^{\lambda}\equiv 1 \pmod{p}$, we see that $B\equiv p+1\equiv 1 \pmod{p}$. Therefore,
    \begin{equation}\label{Eq:b2lam}
      b^{2\lambda}-1\equiv 0 \pmod{p^2},
      \end{equation}
    from \eqref{Eq:b2(p+1)lam 2}. Also, since $\delta_p=-1$ and $\alpha^{\pi}\equiv 1 \pmod{p^2}$, we have from item \ref{I2:QNR} of Theorem \ref{Thm:Period} that
    \begin{equation}\label{Eq:alpha2(p+1)lam}
    \alpha^{2(p+1)\lambda}-1\equiv 0 \pmod{p^2}.
    \end{equation}
    Combining \eqref{Eq:b2lam} and \eqref{Eq:alpha2(p+1)lam} yields
    \begin{equation}\label{Eq:Combine}
    \alpha^{2(p+1)\lambda}-b^{2\lambda}\equiv \left(\alpha^{p+1}-b\right)\left(\alpha^{p+1}+b\right)C \equiv 0 \pmod{p^2},
    \end{equation}
    where
    \begin{align*}C&=\left(\alpha^{2(p+1)}\right)^{\lambda-1}+\left(\alpha^{2(p+1)}\right)^{\lambda-2}b^2+\cdots +\alpha^{2(p+1)}(b^2)^{\lambda-2}+(b^2)^{\lambda-1}\\
    &\equiv \lambda b^{2\lambda+2} \pmod{p},
    \end{align*} since $\alpha^{2(p+1)}\equiv b^2 \pmod{p}$ from item \ref{Per I:2} of Lemma \ref{Lem:Order}. Thus, from \eqref{Eq:b2lam} and the fact that $(p-1)/2 \equiv 0 \pmod{\lambda}$, we deduce that
    $C\equiv \lambda b^{2}\not \equiv 0 \pmod{p}$.  Note that $\alpha^{p+1}-b\not \equiv 0\pmod{p}$ since $\alpha^{p+1}+b\equiv 0\pmod{p}$ and $b\not\equiv 0 \pmod{p}$.
    Therefore, it follows from \eqref{Eq:Combine} that $\alpha^{p+1}\equiv -b \pmod{p^2}$. Hence, $\alpha^p\equiv -b\alpha^{-1}\pmod{p^2}$, and consequently,
    \begin{align*}
    f(\alpha^{p^m})&\equiv \alpha^{2p}-a\alpha^p-b \pmod{p^2}\\
    &\equiv \left(-b\alpha^{-1}\right)^2-a\left(-b\alpha^{-1}\right)-b\pmod{p^2}\\
    &\equiv -b\alpha^{-2}(\alpha^2-a\alpha-b)\pmod{p^2}\\
    &\equiv 0 \pmod{p^2}
    \end{align*} since $\alpha^2-a\alpha-b=0$, which completes the proof that \ref{Lem:Main1 I1} implies \ref{Lem:Main1 I2}.

Finally, to establish that \ref{Lem:Main1 I3} implies \ref{Lem:Main1 I1}, we first note that $\pi(p^2)\in \{\pi(p),p\pi(p)\}$
by item \ref{I4:R} of Theorem \ref{Thm:Period}. Then, in either case, we have that $\alpha^{p\pi(p)}\equiv 1 \pmod{p^2}$, and we conclude from item \ref{I2:QNR} of Theorem \ref{Thm:Period} that
$\alpha^{2p(p+1)\lambda}\equiv 1\pmod{p^2}$. Since $(p-1)/2 \equiv 0 \pmod{\lambda}$, we deduce
\[\alpha^{2p(p+1)(p-1)/2}\equiv \alpha^{p^3-p}\equiv 1 \pmod{p^2},\]
so that $\alpha^{p^3}\equiv \alpha^p \pmod{p^2}$.
It then follows easily that 
\begin{equation}\label{Eq:alphapower mod p^2}
\alpha^{p^{2k}}\equiv \alpha^{p^2} \pmod{p^2} \quad \mbox{and}\quad \alpha^{p^{2k+1}}\equiv \alpha^{p} \pmod{p^2},
\end{equation}
for all integers $k\ge 1$. Hence, from \eqref{Eq:alphapower mod p^2}, we have that
  \begin{equation}\label{Eq:f of powers again}
       f(\alpha^{p^m})\equiv \left\{\begin{array}{cl}
   \alpha^{2p^2}-a\alpha^{p^2}-b \pmod{p^2}& \mbox{if $m\equiv 0 \pmod{2}$,}\\
   \alpha^{2p}-a\alpha^{p}-b \pmod{p^2}& \mbox{if $m\equiv 1 \pmod{2}$.}
   \end{array}\right.
   \end{equation}
   
Since $\delta_p=-1$, we have that $f(x)$ is irreducible modulo $p$. Consequently, the only zeros of $f(x)$ in $(\Z/p^2\Z)[\sqrt{5}]$ are $\alpha$ and $\overline{\alpha}=-b\alpha^{-1}$.
Suppose that $f(\alpha^{p^m})\equiv 0\pmod{p^2}$ for some integer $m\equiv 1 \pmod{2}$. Then, we see from \eqref{Eq:f of powers again} that
\[\mbox{either} \quad \alpha^p\equiv \alpha \pmod{p^2} \quad \mbox{or}\quad \alpha^p\equiv \overline{\alpha}\pmod{p^2}.\]
If $\alpha^p\equiv \alpha \pmod{p^2}$, then, from item \ref{Per I:2} of Lemma \ref{Lem:Order}, we have that
\[\frac{a^2+4b+a\sqrt{a^2+4b}}{2}= \alpha^2+b\equiv \alpha^{p+1}+b\equiv 0 \pmod{p},\]
which implies that $a^2+4b\equiv 0 \pmod{p}$, contradicting the fact that $\delta_p=-1$. Hence, 
\begin{equation}\label{Eq:Condition 1}
  \alpha^p\equiv \overline{\alpha}\equiv -b\alpha^{-1}\pmod{p^2} \quad \mbox{or equivalently,} \quad \alpha^{p+1}\equiv -b\pmod{p^2}.
\end{equation}
Since $\alpha^{2p}-a\alpha^{p}-b \equiv 0 \pmod{p^2}$, then $\overline{\alpha}^{2p}-a\overline{\alpha}^p-b\equiv 0 \pmod{p^2}$ so that
\[\mbox{either} \quad \overline{\alpha}^p\equiv \alpha \pmod{p^2} \quad \mbox{or}\quad \overline{\alpha}^p\equiv \overline{\alpha}\pmod{p^2}.\]
  From \eqref{Eq:Condition 1}, we have that
\begin{equation}\label{Eq:from condition1}
\overline{\alpha}^p\equiv (-b)^p(\alpha^p)^{-1}\equiv (-b)^p(\overline{\alpha})^{-1} \pmod{p^2}.
\end{equation}
If $\overline{\alpha}^p\equiv \overline{\alpha}\pmod{p^2}$, then we see from \eqref{Eq:from condition1} that
$\overline{\alpha}^2\equiv (-b)^p \pmod{p^2}$, so that $\overline{\alpha}^2+b\equiv 0 \pmod{p}$. That is, 
\[\left(\frac{a-\sqrt{a^2+4b}}{2}\right)^2+b= \frac{a^2+4b-a\sqrt{a^2+4b}}{2} \equiv 0 \pmod{p},\]
which implies that $a^2+4b\equiv 0 \pmod{p}$, again contradicting the fact that $\delta_p=-1$. Therefore, $\overline{\alpha}^p\equiv \alpha \pmod{p^2}$, and we see from \eqref{Eq:Condition 1} and \eqref{Eq:from condition1} that
\[(-b)^p\equiv \alpha^{p+1}\equiv -b \pmod{p^2}.\]
Consequently,
\begin{equation}\label{Eq:Condition 2}
(-b)^{p-1}\equiv 1 \pmod{p^2}.
\end{equation}
Combining \eqref{Eq:Condition 1} and \eqref{Eq:Condition 2} yields
\begin{equation}\label{Eq:alphaorder}
\alpha^{p^2-1}\equiv (\alpha^{p+1})^{p-1}\equiv (-b)^{p-1}\equiv 1 \pmod{p^2}.
\end{equation}
Recall that $\pi(p^2)\in \{\pi(p),p\pi(p)\}$ by item \ref{I4:R} of Theorem \ref{Thm:Period}.
If $\pi(p^2)=p\pi(p)$, then we have by item \ref{Per I:1} of Lemma \ref{Lem:Order} and \eqref{Eq:alphaorder} that $p^2-1\equiv 0 \pmod{p}$, which is impossible. Thus, we must have $\pi(p^2)=\pi(p)$, which completes the proof that \ref{Lem:Main1 I3} implies \ref{Lem:Main1 I1} when $m\equiv 1 \pmod{2}$.

  Suppose now that $f(\alpha^{p^m})\equiv 0\pmod{p^2}$ for some integer $m\equiv 0 \pmod{2}$. Then, we see from \eqref{Eq:f of powers again} that
\begin{equation}\label{Eq:m=0 choices}
\mbox{either} \quad \alpha^{p^2}\equiv \alpha \pmod{p^2} \quad \mbox{or}\quad \alpha^{p^2}\equiv \overline{\alpha}\equiv -b\alpha^{-1}\pmod{p^2}.
\end{equation}
If $\alpha^{p^2}\equiv  -b\alpha^{-1}\pmod{p^2}$, then $\alpha^{p^2+1}\equiv -b \pmod{p^2}$. Note that $p\nmid b$ since $\delta_p=-1$. Hence, using item \ref{Per I:2} of Lemma \ref{Lem:Order}, we have that
\begin{align*}\label{Eq:m=0}
  \alpha^{p^2+1}\equiv -b \pmod{p}&\Longrightarrow \left(-b\alpha^{-1}\right)^p\alpha\equiv-b \pmod{p}\\
  &\Longrightarrow (-b)^p(\alpha^p)^{-1}\alpha\equiv -b \pmod{p}\\
  &\Longrightarrow \alpha\equiv \alpha^p \pmod{p}\\ 
  &\Longrightarrow \alpha\equiv \overline{\alpha}, \pmod{p}
\end{align*} which yields the contradiction $a^2+4b\equiv 0 \pmod{p}$. Thus, $\alpha^{p^2}\equiv \alpha \pmod{p^2}$ from \eqref{Eq:m=0 choices}, which implies that $\alpha^{p^2-1}\equiv 1 \pmod{p^2}$. Therefore, the conclusion of the proof in this case is identical to the conclusion of the case $m\equiv 1 \pmod{2}$ following \eqref{Eq:alphaorder}.
\end{proof}

\begin{lemma}\label{Lem:Main2}
Let $s\ge 3$ and $n\ge 1$ be integers. Let $p\ge 3$ be a prime such that 
$p^m\mid \mid s^n$ with $m\ge 1$. Suppose that $\FF_n(x):=f(x^{s^n})$ and $K=\Q(\theta)$, with $\Z_K$ the ring of integers of $K$, where $\FF_n(\theta)=0$.
\begin{enumerate}
\item \label{I1:LemMain2} If  $\delta_p\ne 1$, then
\[[\Z_{K}:\Z[\theta]]\equiv 0\pmod{p}\quad \mbox{if and only if}\quad \alpha^{2p^{m}}-a\alpha^{p^{m}}-b\equiv 0 \pmod{p^2}.\]
\item \label{I2:LemMain2} Furthermore, if  $\delta_p=0$, then $[\Z_{K}:\Z[\theta]]\not \equiv 0\pmod{p}$.
\end{enumerate}
\end{lemma}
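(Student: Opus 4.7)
The plan is to apply Dedekind's Index Criterion (Theorem \ref{Thm:Dedekind}) to $\FF_n(x)$. Writing $s^n=p^m s'$ with $\gcd(p,s')=1$, the identity $\FF_n(x)=f((x^{s'})^{p^m})$ combined with Frobenius in $\F_p[x]$ yields
\[
\overline{\FF_n}(x)\equiv f(x^{s'})^{p^m}\pmod{p}.
\]
I would then split into the two possibilities $\delta_p\in\{-1,0\}$, which govern the shape of the factorization of $\overline{f(x^{s'})}$ in $\F_p[x]$.

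When $\delta_p=-1$, $\overline f$ is irreducible in $\F_p[x]$, and since $\gcd(s',p)=1$ and $\overline b\ne 0$, the polynomial $\overline{f(x^{s'})}$ is separable. Take the lifts $g(x)=f(x^{s'})$ and $h(x)=f(x^{s'})^{p^m-1}$; this gives
\[
F(x)=\frac{f(x^{s'})^{p^m}-f(x^{s^n})}{p}=\widetilde S(x^{s'}),\quad\text{where}\quad \widetilde S(w):=\frac{f(w)^{p^m}-f(w^{p^m})}{p}\in\Z[w].
\]
Dedekind's criterion asks whether $\overline F$ and $\overline g$ share a root in $\overline{\F_p}$. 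Any such root $\zeta$ satisfies $\zeta^{s'}\in\{\alpha,\overline\alpha\}\bmod p$ inside $\Z_K/p\Z_K\cong \F_{p^2}$; since $\alpha\bmod p$ and $\overline\alpha\bmod p$ are Galois conjugate and $\overline{\widetilde S}\in\F_p[w]$, the criterion collapses to the single condition $\widetilde S(\alpha)\equiv 0\pmod{p}$ in $\Z_K$. The identity $p\widetilde S(\alpha)=f(\alpha)^{p^m}-f(\alpha^{p^m})=-f(\alpha^{p^m})$ converts this to $\alpha^{2p^m}-a\alpha^{p^m}-b\equiv 0\pmod{p^2}$, establishing item (a) in this case.

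When $\delta_p=0$, $\overline f(x)=(x-c)^2$ with $2c\equiv a\pmod{p}$, so $\overline{f(x^{s'})}=(x^{s'}-c)^2$. Fix an integer lift $\tilde c$ of $c$ and apply Dedekind with $g(x)=x^{s'}-\tilde c$ and $h(x)=(x^{s'}-\tilde c)^{2p^m-1}$. Writing $f(w)=(w-\tilde c)^2+pL(w)$ for some $L\in\Z[w]$ (valid since $p\mid(2\tilde c-a)$ and $p\mid f(\tilde c)$), a binomial expansion of $(u^2+pL)^{p^m}$ shows $(w-\tilde c)^{2p^m}\equiv f(w)^{p^m}\pmod{p^2}$, so $F(x)\equiv \widetilde S(x^{s'})\pmod{p}$ with the same $\widetilde S$ as before. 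Evaluating at a root $\zeta$ of $\overline g$, and using $\tilde c^{p^m}\equiv\tilde c\pmod{p^2}$ together with the Taylor expansion of $f$ at $\tilde c$, one obtains $\overline F(\zeta)\equiv -f(\tilde c)/p\pmod{p}$. Because $f(\tilde c)=\NN_{K/\Q}(\tilde c-\alpha)$, the squarefreeness of $\D$ combined with total ramification of $p$ in $\Q(\alpha)$ forces $v_p(f(\tilde c))=1$, so $\overline F(\zeta)\ne 0$ and Dedekind yields $p\nmid[\Z_K:\Z[\theta]]$; this proves item (b). For item (a) in this case, both sides of the biconditional are false: the left by item (b), and the right because an induction on $m$ (using $(\tilde c+p\tau)^p\equiv\tilde c^p\equiv\tilde c\pmod{p}$ in $\Z_K$) shows $\alpha^{p^m}\equiv\tilde c\pmod{p\Z_K}$, whence $v_\p(\alpha^{p^m}-\alpha)=v_\p(\alpha^{p^m}-\overline\alpha)=1$, giving $v_\p(f(\alpha^{p^m}))=2<4=v_\p(p^2)$.

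The main technical difficulty is the ramified case $\delta_p=0$: one must handle the relationship between $v_p$ on $\Z$ and the $\p$-adic valuation $v_\p$ on $\Z_K$ (with $p\Z_K=\p^2$), and carefully verify the key congruence $(w-\tilde c)^{2p^m}\equiv f(w)^{p^m}\pmod{p^2}$. The hypothesis that $\D$ is squarefree enters precisely in the step $v_p(f(\tilde c))=1$, so it is essential that we work under condition $(*)$.
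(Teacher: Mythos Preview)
Your approach to item~(a) when $\delta_p=-1$ is essentially the paper's: both apply Dedekind's criterion with $\tau(x)=f(x^{s'})$ (where $s'=s^n/p^m$), obtain $pF(x)\equiv \tau(x)^{p^m}-T(x)\pmod{p^2}$, and reduce via $f(\alpha)=0$ to the condition $f(\alpha^{p^m})\equiv 0\pmod{p^2}$.

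For item~(b) and the $\delta_p=0$ case of item~(a), your route is genuinely different. The paper treats item~(a) uniformly for both values of $\delta_p$, and then for item~(b) works entirely with integers modulo $p^2$: taking $c\equiv 2^{-1}a\pmod p$, it supposes $c^{2p^m}-ac^{p^m}-b\equiv 0\pmod{p^2}$, uses $c^2\equiv -b\pmod p$ to get $c^{2p^m}\equiv(-b)^{p^m}\pmod{p^2}$, squares the resulting relation $ac^{p^m}\equiv(-b)^{p^m}-b$, and completes the square to deduce $(a^2+4b)(-b)^{p^m-2}\equiv\bigl((-b)^{p^m-1}-1\bigr)^2\equiv 0\pmod{p^2}$, contradicting the squarefreeness of $\D$. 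You instead work inside $\Z_{\Q(\alpha)}$ with the ramified prime $\p$ above $p$, using $f(\tilde c)=\NN_{\Q(\alpha)/\Q}(\tilde c-\alpha)$ to obtain $v_p(f(\tilde c))=v_\p(\tilde c-\alpha)=1$; you then dispatch item~(a) for $\delta_p=0$ separately by showing both sides of the biconditional are false via the computation $v_\p(f(\alpha^{p^m}))=2<4=v_\p(p^2)$. Your argument is more structural and makes the role of ``$\D$ squarefree'' transparent (it is exactly what pins $v_\p(\alpha-\tilde c)$ at $1$), while the paper's is fully elementary, needing nothing beyond integer congruences.

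One small slip: the claim $\tilde c^{p^m}\equiv\tilde c\pmod{p^2}$ fails for a generic integer lift $\tilde c$. What you actually need is only $\tilde c^{p^m}\equiv\tilde c\pmod p$ together with $f'(\tilde c)=2\tilde c-a\equiv 0\pmod p$; these already give $f(\tilde c^{p^m})\equiv f(\tilde c)\pmod{p^2}$ by Taylor expansion, and combined with $f(\tilde c)^{p^m}\equiv 0\pmod{p^2}$ (since $p\mid f(\tilde c)$ and $p^m\ge 3$) this yields $\widetilde S(\tilde c)\equiv -f(\tilde c)/p\pmod p$ as desired. Also, the norm in your argument should be $\NN_{\Q(\alpha)/\Q}$ rather than $\NN_{K/\Q}$, since in this lemma $K=\Q(\theta)$.
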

\begin{proof}
For item \ref{I1:LemMain2}, we apply Theorem \ref{Thm:Dedekind} to $T(x):=\FF_n(x)$ using the prime $p$.
 Let
 \begin{equation}\label{Eq:tau}
 \tau(x)=x^{2s^n/p^{m}}-ax^{s^n/p^{m}}-b=f\left(x^{s^n/p^{m}}\right),
 \end{equation} and
$\overline{\tau}(x)=\prod_{i}\overline{\tau_i}(x)^{e_i}$, 
where the $\overline{\tau_i}(x)$ are irreducible in $\F_p[x]$.
Then $\overline{T}(x)=\prod_{i}\overline{\tau_i}(x)^{p^{m}e_i}$. Thus, we can let
\[g(x)=\prod_{i}\tau_i(x) \quad \mbox{and}\quad h(x)=\prod_{i}\tau_i(x)^{p^{m}e_i-1},\] where the $\tau_i(x)$ are monic lifts of the $\overline{\tau_i}(x)$. Note also that
\[g(x)h(x)=\prod_{i}\tau_i(x)^{p^{m}e_i}=\tau(x)+pw(x),\]
for some $w(x)\in \Z[x]$. Then, in Theorem \ref{Thm:Dedekind}, we have that
   \begin{align}\label{Eq:pF}
   \begin{split}
   pF(x)&=g(x)h(x)-T(x)\\
   &=(\tau(x)+pw(x))^{p^{m}}-T(x)\\
   &=\sum_{j=1}^{p^{m}-1}\binom{p^{m}}{j}\tau(x)^j(pw(x))^{p^{m}-j}+\left(pw(x)\right)^{p^{m}}+\tau(x)^{p^{m}}-T(x)\\
   &\equiv \tau(x)^{p^{m}}-T(x) \pmod{p^2}.
   \end{split}
   \end{align}
   Suppose that $\tau(\gamma)=0$. Then, we see from \eqref{Eq:tau} that
   \[\tau(\gamma)=f(\gamma^{s^n/p^{m}})=0.\]
  If $\delta_p=-1$, then $f(x)$ is irreducible modulo $p$, while if $\delta_p=0$, then $f(x)\equiv (x-\alpha)^2 \pmod{p}$, where $\alpha=2^{-1}a \pmod{p}$. In either case, without loss of generality,
 we can assume $\gamma^{s^n/p^{m}}=\alpha$ so that $\gamma^{s^n}=\alpha^{p^{m}}$.
   Hence, from \eqref{Eq:pF}, it follows that
   \begin{align*}
     pF(\gamma)&\equiv -T(\gamma) \pmod{p^2}\\
     &\equiv -\left(\gamma^{2s^n}-a\gamma^{s^n}-b\right) \pmod{p^2}\\
     &\equiv -\left(\alpha^{2p^{m}}-a\alpha^{p^{m}}-b\right) \pmod{p^2},
   \end{align*}
   which completes the proof of item \ref{I1:LemMain2}.

   To establish item \ref{I2:LemMain2}, we show that
  \[S(x):=x^{2p^m}-ax^{p^m}-b\] has no zeros modulo $p^2$ for any integer $m\ge 1$. Observe that
  \[S(x)\equiv f(x)^{p^m}\equiv (x-\alpha)^{2p^m} \pmod{p},\] 
  where $\alpha=2^{-1}a \pmod{p}$. Since $S^{\prime}(\alpha)\equiv 0 \pmod{p}$, we deduce from Hensel that either $S(x)$ has no zeros modulo $p^2$, or has the $p$ zeros:
  \[\alpha,\quad \alpha+p,\quad  \alpha+2p,\quad  \ldots ,\quad \alpha+(p-1)p \quad \mbox{modulo $p^2$}.\]
    Suppose then, by way of contradiction, that
    \begin{equation}\label{Eq:S}
    S(\alpha)=\alpha^{2p^m}-a\alpha^{p^m}-b\equiv 0 \pmod{p^2}.
    \end{equation} Since $\delta_p=0$, then $\alpha^2\equiv 2^{-2}a^2\equiv  -b \pmod{p}$, and therefore,
    \begin{equation}\label{Eq:p^2}
    \alpha^{2p^m}\equiv (-b)^{p^m}\pmod{p^2}.
    \end{equation}
    Hence, from \eqref{Eq:S} and \eqref{Eq:p^2}, we have that
    \begin{equation}\label{Eq:Start}
    a\alpha^{p^m}\equiv (-b)^{p^m}-b \pmod{p^2}.
    \end{equation} Then, squaring both sides of \eqref{Eq:Start} and using \eqref{Eq:p^2} again gives
    \begin{equation}\label{Eq:Next}
      a^2(-b)^{p^m}\equiv \left((-b)^{p^m}-b\right)^2 \pmod{p^2},
    \end{equation}
    which in turn yields
    \begin{align}\label{Eq:Final}
      \Longrightarrow & a^2(-b)^{p^m-2}\equiv \left((-b)^{p^m-1}+1\right)^2 \pmod{p^2} \nonumber \\
      \Longrightarrow & a^2(-b)^{p^m-2}-4(-b)^{p^m-1}\equiv \left((-b)^{p^m-1}+1\right)^2-4(-b)^{p^m-1} \pmod{p^2} \nonumber \\
      \Longrightarrow &  \left(a^2+4b\right)(-b)^{p^m-2}\equiv \left((-b)^{p^m-1}-1\right)^2\equiv 0 \pmod{p^2},
    \end{align}
    since $(-b)^{p^m-1}-1\equiv 0 \pmod{p}$. Thus, since $p\nmid b$, we conclude from \eqref{Eq:Final} that
    \[a^2+4b\equiv 0 \pmod{p^2},\]
    which contradicts the fact that $\D$ is squarefree, and completes the proof of the lemma.
 \end{proof}
\begin{rem}
In the context of Lemma \ref{Lem:Main2}, item \ref{I2:LemMain2} shows that a prime $p$ with $\delta_p=0$ cannot ``cause" $\FF_n(x)$ to be non-monogenic.
\end{rem}

 \begin{proof}[Proof of Theorem \ref{Thm:Main}]
  Note that $\FF_0(x)=f(x)$. We have that $\FF_n(x)$ is irreducible for all $n\ge 0$ by Lemma \ref{Lem:Irreducible}.
  By Theorem \ref{Thm:Swan},
 \begin{equation}\label{Eq:Disc}
 \Delta(\FF_n)=(-b)^{s^n-1}s^{2ns^n}(a^2+4b)^{s^n}.
 \end{equation}

  ($\Rightarrow$) We prove the contrapositive. Assume that $s$ has a prime divisor $p$ that is an $(a,b)$-Wall-Sun-Sun prime, and that $p^m\mid \mid s^n$, with $m\ge 1$. 

  Suppose first that $p=2$, and write $s^n=2^mv$, with $2\nmid v$. Then $2\nmid a$ since $(a,b)=(3,3)_4$ by item \ref{I1: p=2} of Lemma \ref{Lem:3}. Applying item \ref{JKS:I4} of Theorem \ref{Thm:JKS} to $\FF_n(x)$ we see that
  \begin{align}\label{Eq:H}
  G(x)&=x^{2s^n/2^m}-ax^{s^n/2^m}-b \nonumber\\
  &=x^{2v}-ax^v-b \nonumber\\
  &\equiv x^{2v}+x^v+1 \pmod{2}\quad \mbox{and} \nonumber\\
  H(x)&=\frac{-ax^{2^mv}-b+(ax^v+b)^{2^m}}{2} \nonumber\\
  &=\left(\frac{a^{2^m}-a}{2}\right)x^{2^mv}+\sum_{j=1}^{2^m-1}\frac{\binom{2^m}{j}}{2}(ax^v)^jb^{2^m-j}+\frac{b^{2^m}-b}{2}\\
  &\equiv \left(x^{2v}+x^v+1\right)^{2^{m-1}} \pmod{2}, \nonumber
  \end{align}
  since $a^{2^m}-a\equiv b^{2^m}-b\equiv 2 \pmod{4}$ and \cite{HK} 
  \begin{equation}\label{Eq:Binom}
  \binom{2^m}{j}\equiv \left\{\begin{array}{cl}
  0 \pmod{4} & \mbox{if $j\ne 2^{m-1}$}\\
  2 \pmod{4} & \mbox{if $j=2^{m-1}$.}
  \end{array}\right.
  \end{equation}
  Thus, $G(x)$ and $H(x)$ are not coprime in $\F_2[x]$, and therefore, $\FF_n(x)$ is not monogenic.

  Suppose next that $p\ge 3$. Recall that if $p=3$, then $\delta_3=-1$ by hypothesis. If $p\ge 5$, then, since $p$ is an $(a,b)$-Wall-Sun-Sun prime and $\delta_p\ne 1$, we conclude that $\delta_p=-1$, from  item \ref{I4:p>3 divides a^2+4b} of Lemma \ref{Lem:3}. Thus, $\FF_n(x)$ is non-monogenic by Lemma \ref{Lem:Main1} and item \ref{I1:LemMain2} of Lemma \ref{Lem:Main2}, which completes the proof in this direction.

($\Leftarrow$) Note that when $s=1$, we have that $\FF_n(x)=f(x)$ for all $n\ge 0$, and so $\FF_n(x)$ is monogenic by Lemma \ref{Lem:Basic1}. So, assume that $s\ge 2$, and suppose that no prime divisor of $s$ is an $(a,b)$-Wall-Sun-Sun prime.

 For $n\ge 0$, define
  \[\alpha_n:=\alpha^{1/s^n} \quad \mbox{and} \quad K_n:=\Q(\alpha_n).\]
   Then $\alpha_0=\alpha$ and, since $\FF_0(x)=f(x)$ is monogenic, we have that $\Delta(\FF_0)=\Delta(K_0)$. Additionally, for all $n\ge 1$, we have that   \[\FF_n(\alpha_n)=0\quad \mbox{and} \quad [K_{n}:K_{n-1}]=s,\] by Lemma \ref{Lem:Irreducible}. 
 We assume that $\FF_{n-1}(x)$ is monogenic, so that $\Delta(\FF_{n-1})=\Delta(K_{n-1})$, and we proceed by induction on $n$ to show that $\FF_{n}(x)$ is monogenic. Let $\Z_{K_{n}}$ denote the ring of integers of $K_{n}$.  Consequently, by Theorem \ref{Thm:CD}, it follows that
 \begin{equation*}\label{Eq:CD}
 \Delta(\FF_{n-1})^s \mbox{ divides } \Delta(K_{n})=\dfrac{\Delta(\FF_{n})}{[\Z_{K_{n}}:\Z[\alpha_{n}]]^2},
 \end{equation*}
 which implies that
 \[[\Z_{K_{n}}:\Z[\alpha_{n}]]^2 \mbox{ divides }  \dfrac{\Delta(\FF_{n})}{\Delta(\FF_{n-1})^s}.\]
   We see from \eqref{Eq:Disc} that
\begin{align*}
 \abs{\Delta(\FF_{n-1})^s}&=(-b)^{s^{n}-s}s^{2(n-1)s^{n}}(a^2+4b)^{s^{n}}\quad \mbox{ and}\\ \abs{\Delta(\FF_{n})}&=(-b)^{s^{n}-1}s^{2ns^{n}}(a^2+4b)^{s^{n}}.
 \end{align*}
 Hence,
 \[\abs{\dfrac{\Delta(\FF_{n})}{\Delta(\FF_{n-1})^s}}=(-b)^{s-1}s^{2s^{n}}.\]  Thus, it is enough to show that $\gcd(bs,[\Z_{K_{n}}:\Z[\alpha_{n}]])=1$.
  Recall that $\gcd(b,s)=1$ by hypothesis.

  Suppose first that $p$ is a prime divisor of $b$.
  If $p\mid a$, then it follows that
  \begin{equation}\label{Eq:not zero}
  [\Z_{K_{n}}:\Z[\alpha_{n}]]\not \equiv 0\pmod{p}
  \end{equation} by item \ref{JKS:I1} of Theorem  \ref{Thm:JKS} since $b$ is squarefree.
  So, assume that $p\nmid a$.
  In this case, we apply item \ref{JKS:I3} of Theorem  \ref{Thm:JKS} to $\FF_{n}(x)$.  Observe that $A_1=0$ since $p\nmid s$, and $B_2=-b/p\not \equiv 0 \pmod{p}$ since $b$ is squarefree. Thus, the first condition of item \ref{JKS:I3} holds, and therefore once again we have \eqref{Eq:not zero}.

  Suppose next that $p$ is a prime divisor of $s$ with $s^n=p^mv$, where $m\ge 1$ and $p\nmid v$. Note that $p\nmid b$.

  We first address the prime $p=2$. If $2\mid a$, then we apply item \ref{JKS:I2} of Theorem  \ref{Thm:JKS} to $\FF_{n}(x)$. Observe from conditions $(*)$ that in this case, $2\mid \mid a$ and $b\equiv 1 \pmod{4}$ since respectively, $a\not \equiv 0 \pmod{4}$ and $\D=(a/2)^2+b$ is squarefree. Thus,
  \[A_2=-\frac{a}{2}\not \equiv 0 \pmod{2} \quad \mbox{and} \quad B_1=\frac{-b+b^{2^{m+1}}}{2}\equiv 0 \pmod{2},\] from which we conclude that the second condition of item \ref{JKS:I2} of Theorem  \ref{Thm:JKS} holds. Therefore, in this case, we have \eqref{Eq:not zero}. If $2\nmid a$, we apply item \ref{JKS:I4} of Theorem  \ref{Thm:JKS} to $\FF_n(x)$. Since $2$ is not an $(a,b)$-Wall-Sun-Sun prime, we have that
  \[(a,b)_4\in \{(1,1),(1,3),(3,1)\}\]
  from item \ref{I0:p=2} of Theorem \ref{Thm:Period}. Since
  \[c^{2^m}-c\equiv \left\{\begin{array}{cl}
  0 \pmod{4} & \mbox{if $c\equiv 1 \pmod{4}$}\\
  2 \pmod{4} & \mbox{if $c\equiv 3 \pmod{4}$,}
  \end{array}\right.\] for an integer $c$ and and an integer $m\ge 1$, it follows from \eqref{Eq:H} and \eqref{Eq:Binom} that
  \[\overline{G}(x)=x^{2v}+x^v+1 \quad \mbox{and}\]
  \[\overline{H}(x)=\left\{
  \begin{array}{ll}
   x^{2^{m-1}v} & \mbox{if $(a,b)_4=(1,1)$}\\
   \left(x^{v}+1\right)^{2^{m-1}} & \mbox{if $(a,b)_4=(1,3)$}\\
   x^{2^{m-1}v}\left(x^v+1\right)^{2^{m-1}} & \mbox{if $(a,b)_4=(3,1)$.}
   \end{array}\right.\] Hence, for every zero $\rho$ of $\overline{H}(x)$, we see that $\overline{G}(\rho)=1$. Thus, $G(x)$ and $H(x)$ are coprime modulo 2, so that \eqref{Eq:not zero} holds with $p=2$.

  Now suppose that $p\ge 3$. If $\delta_p=-1$, then \eqref{Eq:not zero} follows from Lemma \ref{Lem:Main1} and item \ref{I1:LemMain2} of Lemma \ref{Lem:Main2}, since $p$ is not an $(a,b)$-Wall-Sun-Sun prime. Recall that $\delta_3=-1$ by hypothesis. Then, finally, if $p\ge 5$ with $\delta_p=0$, it follows from item \ref{I2:LemMain2} of Lemma \ref{Lem:Main2} that \eqref{Eq:not zero} holds, completing the proof of the theorem.
  \end{proof}



\end{document}